%%%%%%%%%%%%%%%%%%%%%%%%%%%%%%%%%%%%%%%%%%%%%%%%%%%%%%%%%%%%%%%%%%%%%%%%%%%%%%%%
%2345678901234567890123456789012345678901234567890123456789012345678901234567890
%        1         2         3         4         5         6         7         8

\documentclass[letterpaper, 10 pt, conference]{ieeeconf}  % Comment this line out
                                                          % if you need a4paper
%\documentclass[a4paper, 10pt, conference]{ieeeconf}      % Use this line for a4
                                                          % paper

\IEEEoverridecommandlockouts                              % This command is only
                                                          % needed if you want to
                                                          % use the \thanks command
\overrideIEEEmargins
% See the \addtolength command later in the file to balance the column lengths
% on the last page of the document

% The following packages can be found on http:\\www.ctan.org
\usepackage{graphics} % for pdf, bitmapped graphics files
\usepackage{epsfig} % for postscript graphics files
 \usepackage{mathptmx} % assumes new font selection scheme installed
\usepackage{amsmath} % assumes amsmath package installed
\usepackage{amssymb}  % assumes amsmath package installed
\usepackage{subfigure}
\usepackage{xcolor}
\usepackage{enumerate}

\newcommand{\contTilde}[1]{\mathbf{\tilde{#1}}}
\newcommand{\transpose}{\mathsf{T}}

\newcommand{\quadinner}[1]{x^{\transpose}(#1)x}
\DeclareMathOperator{\contB}{\mathbf{B}}

\newcommand{\BK}[1]{\mathbf{B}\bar{K}_{#1}}
%bound \| x_t - x_{t|t} \|

% \newcommand{\boundxtxtgt1}[1]{\frac{#1}{2}}
%bound |K_t|t - K_t^*|

%bound xt|t

% \newcommand{\\det}{\mathit{\det}}
% \newcommand{\Log}[1]{\text{Log}}

\newtheorem{lemma}{Lemma}
\newtheorem{assumption}{Assumption}
\newtheorem{definition}{Definition}
\newtheorem{corollary}{Corollary}
\newtheorem{proposition}{Proposition}
\newtheorem{remark}{Remark}
\newtheorem{theorem}{Theorem}

\title{\LARGE \bf
Two-Player Dynamic Potential LQ Games with Sequentially Revealed Costs
}

\author{ \parbox{3 in}{\centering Yitian Chen, Timothy L. Molloy, Iman Shames
        \thanks{The authors are with CIICADA Lab, School of Engineering,        Australian National University (ANU) \texttt{\{yitian.chen,timothy.molloy,iman.shames\}@anu.edu.au
        }}\\
        }
}

% \author{Yitian Chen, Timothy Molloy, Iman Shames% <-this % stops a space
% }
% \author[Australia]{Yitian Chen}\ead{Yitian.Chen@anu.edu.au},    % Add the 
% \author[Australia]{Timothy L.~Molloy}\ead{Timothy.Molloy@anu.edu.au},               % e-mail address 
% \author[Australia]{Iman Shames}\ead{Iman.Shames@anu.edu.au}  % (ead) as shown

% \address[Australia]{CIICADA Lab, School of Engineering, Australian National University (ANU), Canberra, ACT 2601, Australia}  % Please supply                                     

\allowdisplaybreaks

\begin{document}

\maketitle
\thispagestyle{empty}
\pagestyle{empty}

%%%%%%%%%%%%%%%%%%%%%%%%%%%%%%%%%%%%%%%%%%%%%%%%%%%%%%%%%%%%%%%%%%%%%%%%%%%%%%%%
\begin{abstract}
We investigate a novel finite-horizon linear-quadratic (LQ) feedback dynamic potential game with a priori unknown cost matrices played between two players.
The cost matrices are revealed to the players sequentially, with the potential for future values to be previewed over a short time window. 
We propose an algorithm that enables the players to predict and track a feedback Nash equilibrium trajectory, and we measure the quality of their resulting decisions by introducing the concept of \emph{price of uncertainty}.
We show that under the proposed algorithm, the price of uncertainty is bounded by horizon-invariant constants.
The constants are the sum of three terms; the first and second terms decay exponentially as the preview window grows, and another depends on the magnitude of the differences between the cost matrices for each player. 
Through simulations, we illustrate that the resulting price of uncertainty initially decays at an exponential rate as the preview window lengthens, then remains constant for large time horizons.
\end{abstract}

%%%%%%%%%%%%%%%%%%%%%%%%%%%%%%%%%%%%%%%%%%%%%%%%%%%%%%%%%%%%%%%%%%%%%%%%%%%%%%%%
\section{INTRODUCTION}
Noncooperative dynamic game theory is a mathematical framework for decision-making among rational players in dynamic environments \cite{basar_dynamic_1998,engwerda_lq_2005}.
It has proven invaluable for modelling interactions between agents in applications including networked controls and communications \cite{perin_static_2023,thanh_van_dynamic_2022}, economics \cite{wu_real-time_2022,yan_stability_2022} and power systems \cite{dong_non-cooperative_2022,scarabaggio_noncooperative_2022,serna-torre_non-cooperative_2024,wu_market_2024}.

\emph{Dynamic Potential LQ Games} are a class of dynamic games in which feedback Nash equilibria can be determined by solving multivariate optimal control problems \cite{prasad_structure_2023}.
The recent development of dynamic potential LQ games has improved the tractability of noncooperative dynamic game models in many applications by enabling the use of well-established optimal-control solutions techniques to find Nash equilibria \cite{zazo_dynamic_2016, prasad_structure_2023}.
The majority of dynamic potential LQ games are, however, formulated assuming that players have complete knowledge of the costs incurred by themselves and others.
In many practical situations, the costs that players could incur are unknown in advance.
We therefore examine dynamic potential LQ games with \emph{a priori} unknown cost matrices.

Our practical motivation for considering dynamic potential games with \emph{a priori} unknown costs is tied to the integration of renewable energy and community batteries into domestic energy markets.
For example, dynamic potential LQ game theory has been used to model and analyse scheduling energy usage for multiple users over a period of time, with each user seeking to minimise generation costs from renewable energy sources \cite{wu_demand_2011}.
Renewable generation costs are time-varying due to their dependency on weather conditions and can potentially be predicted only over a short future horizon. 
However, existing models do not account for the sequential revelation of costs.
Motivated by the above problem, we consider dynamic potential LQ games with sequentially revealed pay-offs (and a more general class of linear systems). 
The analysis in our work opens the possibility of more comprehensive models of decentralised decision-making in energy markets that accounts for renewable energy availability. 

From a theoretical standpoint, our consideration of dynamic potential games with sequentially revealed costs extends recent \emph{online} LQ optimal control problems \cite{chen_regret_2023, zhang_regret_2021, cohen_online_2018, akbari_logarithmic_2022} from a single-player setting to a multi-player noncooperative setting.
The extension is, however, nontrivial. 
In online LQ control problems, the concept of \emph{regret} is used to measure suboptimality of an algorithm against the optimal solution in hindsight.  
In a dynamic game setting, similar concepts of best-performance-in-hindsight are more difficult to define because the concept of optimality is itself ambiguous in noncooperative competitive settings, leading to equilibria solution concepts.
Similarly, any proposed algorithms for solving online dynamic games must be tailored to the specific solution concept.
To the best of our knowledge, these challenges have not been overcome in generalising results and algorithms from online LQ control problems to online LQ dynamic game problems. Importantly, while other decision quality measures have been explored in Markov games \cite{narasimha_multi-agent_2022, leonardos_global_2022, tian_online_2021}, these notions offer limited insight into LQ settings with continuous state and action spaces.
% Motivated by the concept of the Price of Information (PoI) \cite[Definition 3]{basar_prices_2011}, which captures discrepancies between Nash equilibria under different information structures in static games

We consider feedback Nash equilibria as the solution concept of interest and introduce a price of uncertainty (PoU) performance measure to capture deviations of online algorithms from the value of Nash equilibria of the game computed in hindsight. 
Motivated by the concept of the price of uncertainty (PoU) \cite[Section 3.2]{grossklags_price_2010} \cite{basar_prices_2011} in static games, which captures discrepancies between expected payoffs in environments with complete and incomplete information, our PoU quantifies the difference between a decision made with causal cost information and a policy that achieves a feedback Nash equilibrium in hindsight. This is done by evaluating the difference between their respective values. In addition, in our case, the expected payoff under complete cost information is determined by the average cost incurred at the feedback Nash equilibrium, which represents the incurred cost by the decision naturally made by players in the noncooperative game setting.

In general, the existence and uniqueness of a Nash equilibrium in a noncooperative dynamic game are not guaranteed, even if the cost functions of all players are strictly convex \cite[Chapter 6, Section 6.2.2]{basar_dynamic_1998}. 
However, we shall impose specific conditions on the parameters of LQ dynamic feedback potential games so that we may define both our algorithm and performance guarantees with respect to a unique feedback Nash equilibrium.

The key contributions of this paper are:
\begin{enumerate}
    \item The formulation of a novel two-player LQ dynamic feedback potential game problem with sequentially revealed costs and a price of uncertainty (PoU) performance measure;
    \item The proposal of an algorithm that enables both players to predict and track a feedback Nash equilibrium in a two-player LQ dynamic feedback potential game with sequentially revealed costs;
    \item The lower and upper bound on the PoU achieved by our proposed algorithm;
    \item Numerical simulation on a modified electricity usage schedule problem based on \cite{wu_demand_2011}. Results show that the absolute value of PoU initially decays exponentially as the preview window increases, then remains constant for large time horizons, which matches our theoretical lower and  upper bounds.
\end{enumerate}

This paper is structured as follows.
In Section \ref{sec:problem_formulation}, we formulate the problem of two-player LQ dynamic feedback potential game with sequentially revealed costs. In Section \ref{sec:approach}, we present our proposed algorithm and an upper bound on its price of uncertainty. In Section \ref{sec:numerical}, we illustrate the performance of the proposed algorithm applied to a two-player electricity usage scheduling problem. We present concluding remarks and future directions in Section \ref{sec:conclusions}.

\paragraph*{Notation}
The $ij$-th block of a block matrix $\mathbf{H}$ is
denoted by $[\mathbf{H}]_{ij}$.
Moreover, for $i \in \{1,2\}$, we define the following notation for a (dual-indexed) sequence
$(\pi_{t}^{i})_{t=1}^{T-1} := (\pi_{i,1},\cdots, \pi_{i,T-1})$ and $(\pi_{t}^{i})_{i=1,t=1}^{2,T-1} := ((\pi_{t}^{1})_{t=1}^{T-1},(\pi_{t}^{2})_{t=1}^{T-1})$.
We use $\|\cdot\|$ to denote the 2-norm of a vector or a matrix, depending on
its argument. For symmetric matrices $F,G$ with appropriate dimensions, $F
\preceq G$ if and only if $G-F$ is positive semi-definite. Let $\rho(\cdot)$ be the spectral radius operator. For a positive integer $k$, define $\mathbb{N}_k$
as the set of integers $\{1,2,\dots,k\}$. We use $\lambda_{n}(\cdot)$,
$\lambda_{min}(\cdot)$ and $\lambda_{max}(\cdot)$ to designate the $n$-th, the
minimum and the maximum eigenvalue of a matrix, respectively. Similarly for the
singular value operator, $\sigma_{n}(\cdot)$, $\sigma_{min}(\cdot)$ and
$\sigma_{max}(\cdot)$ denote the $n$-th, the smallest, and the largest singular values of a matrix. Moreover, let $\sigma_{min}^{+}(\cdot)$ and $\sigma_{max}^{+}(\cdot)$ be the operators for minimum and maximum non-zero
singular value, respectively. Let $\mathbb{S}_{++}^{n}$ denote the set of $n\times n$ positive definite matrix and $\mathbb{S}_{+}^{n}$ denote the set of $n\times n$ positive semi-definite matrix. 

\section{PROBLEM FORMULATION}
\label{sec:problem_formulation}

In this section, we formulate a two-player LQ dynamic feedback potential game with sequentially revealed costs, and provide the technical assumptions under which we shall develop our associated algorithm and PoU analysis.

\subsection{Two-Player LQ Dynamic Feedback Potential Game with Sequentially Revealed Costs}\label{section:LQ-DFG}
Let us first define a (noncooperative) two-player LQ dynamic feedback game (LQ-DFG).
The state $x_t \in \mathbb{R}^n$ of the game evolves according to the discrete-time linear system
\begin{align}
    \label{eq:linsys}
    x_{t+1} &= Ax_{t} + \mathbf{B}\mathbf{u}_{t}, \quad x_{1} = \bar{x}_{1},
\end{align}
where $t$ is a non-negative integer, $A \in \mathbb{R}^{n\times n}$ and $\mathbf{B} = [B^{1}, B^{2}]$, $B^{1},B^{2} \in \mathbb{R}^{n\times m}$ are \emph{ system matrices}, $\mathbf{u}_{t} = [u_{t}^{1\transpose}, u_{t}^{2\transpose}]^{\transpose}$ where $u_{t}^{1}, u_{t}^{2} \in \mathbb{R}^{m}$ are player controls, and $\bar{x}_{1} \in \mathbb{R}^{n}$ is the initial state.
Let $\Lambda$ denote the set of all measurable maps from $\mathbb{R}^{n}$ to $\mathbb{R}^{m}$ and $\Pi_{t}:= (\pi_{t}^{1},\pi_{t}^{2})$.
Each player $i$, $i \in \{1,2\}$, selects a \emph{feedback control policy} $\pi_{t}^{i} \in \Lambda$, to determine their controls according to $u_{t}^{i} = \pi_{t}^{i}(x_{t})$ with the aim of minimising a quadratic cost function 
\begin{align}\label{eq:LQcost}
    \begin{split}
        &J_{i,T}(\bar{x}_{1}, (\Pi_{t})_{t=1}^{T-1}) := \sum_{t=1}^{T-1} x_{t+1}^{\transpose}Q_{t+1}x_{t+1} + \mathbf{u}_{t}^{\transpose}R_{t}^{i}\mathbf{u}_{t}
    \end{split}
\end{align}
subject to \eqref{eq:linsys} where $\mathbf{u}_{t} = [u_{t}^{1\transpose}, u_{t}^{2\transpose}]^{\transpose} = [\pi_{t}^{1}(x_{t})^{\transpose}, \pi_{t}^{2}(x_{t})^{\transpose}]^{\transpose} = \Pi_{t}(x_{t})$, $Q_{t+1}\in\mathbb{S}_{++}^n$ and 
$R_{t}^{i}$ admits the following block structure:
    \begin{align}\label{eq:rblock}
   &R_{t}^{i} := 
   \begin{bmatrix}
       [R_{t}^{i}]_{11} & [R_{t}^{i}]_{12}\\
       [R_{t}^{i}]_{21} & [R_{t}^{i}]_{22}
   \end{bmatrix}\in \mathbb{R}^{2m\times 2m}\text{ with }[R_{t}^{i}]_{jk}\in \mathbb{R}^{m\times m}.
   \end{align}
for all $t \in \mathbb{N}_{T-1}$ and $j,k\in \{1,2\}$.
The players do not (explicitly) cooperate, and thus we shall consider the solution that arises to be a \emph{feedback Nash equilibrium}. For feedback Nash equilibrium policies $\pi_{t}^{1*}, \pi_{t}^{2*}$, let $\Pi_{t}^{*}:=(\pi_{t}^{1*}, \pi_{t}^{2*})$.
Specifically, the feedback control policies $(\pi_{t}^{1*})_{t = 1}^{T-1}$ and $(\pi_{t}^{2*})_{t = 1}^{T-1}$ with $\pi_{t}^{1*},\pi_{t}^{2*} \in \Lambda$, constitute a \emph{feedback Nash equilibrium} if, for any given $t\in \mathbb{N}_{T-1}$ and $i\in \{1,2\}$, they satisfy the pair of inequalities \cite[Equation (3.27)-(3.28)]{basar_dynamic_1998} 
\begin{align}\label{eq:nashIneq}
    &J_{T}^{1}(\bar{x}_{1}, (\Pi_{\tau})_{\tau=1}^{t-1}, \Pi_{t}^{*},(\Pi_{\tau}^{*})_{\tau=t+1}^{T-1})\notag\\
    &\qquad \leq J_{T}^{1}(\bar{x}_{1}, (\Pi_{\tau})_{\tau=1}^{t-1}, (\pi_{t}^{1},\pi_{t}^{2*}),(\Pi_{\tau}^{*})_{\tau=t+1}^{T-1}),\\
    &J_{T}^{2}(\bar{x}_{1}, (\Pi_{\tau})_{\tau=1}^{t-1}, \Pi_{t}^{*},(\Pi_{\tau}^{*})_{\tau=t+1}^{T-1})\notag\\
    &\qquad \leq J_{T}^{1}(\bar{x}_{1}, (\Pi_{\tau})_{\tau=1}^{t-1}, (\pi_{t}^{1*},\pi_{t}^{2}),(\Pi_{\tau}^{*})_{\tau=t+1}^{T-1}),\notag
\end{align}
for any feedback control policies $(\pi_{t}^{1})_{t=1}^{T-1}$ and $(\pi_{t}^{2})_{t=1}^{T-1}$ with $\pi_{t}^{1},\pi_{t}^{2} \in \Lambda$. 
The state dynamics \eqref{eq:linsys}, player cost functions \eqref{eq:LQcost}, and feedback Nash equilibrium solution concept \eqref{eq:nashIneq}, together define a two-player LQ-DFG.
For notational convenience, let us define $\mathrm{DFG}$ as an operator that returns state and control sequences under a feedback Nash equilibrium solution of this LQ-DFG, i.e., 
\begin{align}\label{eq:DFG}
    ((x_{t}^{*})_{t=1}^{T},(\mathbf{u}_{t}^{*})_{t=1}^{T-1})
    = \mathrm{DFG}(\mathcal{I},T,A,\contB)
\end{align}
where $\mathbf{u}_{t}^{*} = \Pi_{t}^{*}(x_{t}^{*})$ and $\mathcal{I} := (\bar{x}_{1}, (Q_{\tau+1}, R_{\tau}^{1},R_{\tau}^{2})_{\tau=1}^{T-1}))$ is the initial state and cost-function information necessary to compute a feedback Nash equilibrium. 
There is a close relationship between LQ-DFGs, and linear-quadratic optimal control problems (LQ-OCPs), with LQ-OCPs defined as follows.
\begin{definition}[LQ-OCP]\label{def:LQOCP}
    An LQ-OCP is the problem of finding a policy $(\bar{\Pi}_{t})_{t=1}^{T-1}$ solving
    \begin{align}
    \min_{(\bar{\Pi}_{t})_{t=1}^{T-1}}\qquad & \sum_{t=1}^{T-1} x_{t+1}^{\transpose}\bar{Q}_{t+1}x_{t+1} + \mathbf{u}_{t}^{\transpose}\bar{R}_{t}\mathbf{u}_{t}\label{eq:LQ-OCP}\\
    \text{s.t.} \qquad & \mathbf{u}_{t} = \bar{\Pi}_{t}(x_{t}), \text{ and } \eqref{eq:linsys}. \notag
\end{align}
for a given positive integer $T \geq 1$, and positive definite cost matrices $(\bar{Q}_{t})_{t=1}^{T}$ and $(\bar{R}_{t})_{t=1}^{T-1}$.
\end{definition}
\begin{remark}\label{remark:optimalControl}
    A special case of the LQ game is  $[R_{t}^{1}]_{11} = [R_{t}^{2}]_{22}$, $[R_{t}^{1}]_{12} = [R_{t}^{2}]_{21}$ and $B^{1} = B^{2}$. In this scenario, the LQ dynamic game reduces to an optimal control problem. In our work, we consider a more general setting beyond this optimal control case.
\end{remark}
 We specifically leverage a recent result on the connection between LQ-DFGs and LQ-OCPs that gives rise to the following recently developed concept of linear-quadratic dynamic feedback potential games (LQ-DFPGs).
\begin{definition}[LQ-DFPG \cite{prasad_structure_2023}]\label{def:LQ-DFPG}
    An LQ-DFG is referred to as an LQ-DFPG, if there exists an LQ-OCP such that the solution of the LQ-OCP is a feedback Nash equilibrium of the LQ-DFG.
\end{definition}

When the cost-matrices are known \emph{a priori} to the players, the (Nash-equilibrium) solution of an LQ-DFPG can be found in closed form (cf.\ \cite{prasad_structure_2023} and \cite[Chapter 6]{basar_dynamic_1998}).
However, in practice full information about the cost matrices over the whole time horizon $T$ may be unavailable to the players in advance.
Hence, in this paper, we suppose that at any time $t \in \mathbb{N}_{T-1-W}$ where $W\in\mathbb{N}_{T-1} \cup \{0\}$, only the initial condition of the system \eqref{eq:linsys} and the (partial) sequences of cost matrices $(Q_{\tau+1})_{\tau=1}^{t+W}$ and $(R_{\tau}^i)_{i=1,\tau=1}^{2,t+W}$ are known to the players.
Let the cost information available to the players at time $t$ be
\begin{align}\label{eq:history}
    \mathcal{H}_{t} := (\bar{x}_{1}, (Q_{\tau+1}, R_{\tau}^{1},R_{\tau}^{2})_{\tau=1}^{t+W}).
\end{align}
Our focus will be to propose a novel control policy for each player in an LQ-DFPGs that uses the information available to them only at time $t$, and to investigate the resulting deviation in performance from a feedback Nash equilibrium by using the novel concept of price of uncertainty in dynamic games.
For $i \in \{1,2\}$, we specifically consider player-feedback control policies $\pi_{t}^{i} (\cdot, \cdot)$ of the form 
\begin{align}\label{eq:feedbackForm}
    u_{t}^{i} = \pi_{t}^{i}(x_t, \mathcal{H}_t),
\end{align}
and adopt the following notion of the price of uncertainty.
\begin{definition}[Price of Uncertainty]
Given an LQ-DFPG, let $\Lambda_{\text{NE}}^{*}$ be the set of all feedback Nash equilibrium policies. For any sequence of player control policy $(\Pi_{t})_{t=1}^{T-1}$ that relies only on the available information $\mathcal{H}_{t}$ in form of \eqref{eq:feedbackForm}, we define the price of uncertainty (PoU) as
\begin{align}\label{eq:regret}
    &\text{PoU}_{T}((\Pi_{t})_{t=1}^{T-1})\notag\\
    &\!\!:=\!\! \frac{1}{2}\!\sum_{i=1}^{2}\! J_{i,T}(\bar{x}_{1},(\Pi_{t})_{t=1}^{T-1})\!-\!\!\!\! \min_{(\Pi_{t}^{*})_{t=1}^{T-1}\in \Lambda_{\text{NE}}^{*}}\!\!\frac{1}{2}\sum_{i=1}^{2}J_{i,T}(\bar{x}_{1},(\Pi_{t}^{*})_{t=1}^{T-1}).
\end{align}
\end{definition}
The notion of PoU above measures the maximum possible gap between the player control policies and the feedback Nash equilibrium policies determined in hindsight, therefore we pick $(\Pi_{t}^{*})_{t=1}^{T-1}$ that achieves the minimal average cost among all $\Lambda_{\text{NE}}^{*}$.
In our work, we specifically consider the scenario in which the LQ-DFG has a unique feedback Nash equilibrium. Thus, 
\begin{align}\label{eq:uniquePoU}
    &\text{PoU}_{T}((\Pi_{t})_{t=1}^{T-1})\notag\\
    &\!\!=\!\! \frac{1}{2}\!\sum_{i=1}^{2}\! J_{i,T}(\bar{x}_{1},(\Pi_{t})_{t=1}^{T-1})\!-J_{i,T}(\bar{x}_{1},(\Pi_{t}^{*})_{t=1}^{T-1}).
\end{align}
Details of the scenario are given in the next section.

\begin{remark}
     The price of uncertainty coincides with
\emph{dynamic regret} in online optimal control (cf. \cite{chen_regret_2023}) when there is only a single player. In other words, in this scenario, PoU reduces to \emph{dynamic regret} in \cite[Equation (9)]{chen_regret_2023}. 
\end{remark}
We aim to show that the PoU associated with our proposed policy is sublinear with respect to the time horizon $T$, i.e., $\text{PoU}_{T}((\Pi_{t})_{t=1}^{T-1})= o(T)$, where $o(T)$ satisfies $\lim_{T\rightarrow \infty} \frac{o(T)}{T} = 0$. This implies that, on average, the algorithm performs as well as the strategy that leads to feedback Nash equilibrium in hindsight, evaluated from the perspective of average costs.

% {\color{red}In addition, in comparison with the \emph{price of uncertainty} given by
% \begin{align*}
%     \text{PoU} =  
% \end{align*}
% \begin{align*}
%     \text{PoA} = \frac{\min_{(\Pi_{t}^{*})_{t=1}^{T-1}\in \Lambda_{\text{NE}}^{*}}\!\!\frac{1}{2}\sum_{i=1}^{2}J_{i,T}(\bar{x}_{1},(\Pi_{t}^{*})_{t=1}^{T-1})}{\min_{(\tilde{\Pi}_{t})_{t=1}^{T-1}\in \Lambda}\!\!\frac{1}{2}\sum_{i=1}^{2}J_{i,T}(\bar{x}_{1},(\tilde{\Pi}_{t})_{t=1}^{T-1})},
% \end{align*}
% the notion of PoU quantifies the performance gap arising from the restriction that the control policies can only access to causal information. In contrast, \emph{PoA} captures the relative inefficiency of Nash equilibrias, where each player's strategy is computed with full knowledge of costs among all players. }

\subsection{Technical Assumptions and Preliminary Results}\label{sec:assumptions}
We develop our algorithm and analysis under the following assumptions on the systems structure and the cost matrices that ensure we are considering LQ-DFPGs. 
%For $i,j,k\in\{1,2\}$ and $t \in \mathbb{N}_{T-1}$, the matrices $Q_{t}$, $[R_{t}^{i}]_{jk}$, $A$ and $B^{i}$ satisfy the following assumptions.

We first introduce an assumption that an LQ-DFG with parameters $A,\contB,(Q_{t})_{t=1}^{T},(R_{t}^{1},R_{t}^{2})_{t=1}^{T-1}$ that satisfy the conditions given in \cite[Theorem 6]{prasad_structure_2023} is sufficient to constitute an LQ-DFPG. Moreover, the invertibility of the matrices $\Theta_{t}$ in \eqref{eq:costFPDG2} guarantees the uniqueness of feedback Nash Equilibria in the LQ dynamic game, regardless of whether the game is potential \cite[Remark 6.5, Chapter 6.2.2]{basar_dynamic_1998}. This assumption ensures that there is only a single policy available in the set $\Lambda_{\text{NE}}^{*}$.

\begin{assumption}\label{assumption:parameters}
        For $t\in \mathbb{N}_{T-1},i,j\in \{1,2\}$,
    define
        \begin{align}\label{eq:Theta}
        [\Theta_{t}]_{ij} := [R_{t}^{i}]_{ij} + B^{i\transpose}P_{t+1}^{i}B^{j},
        \end{align}
    \begin{align}\label{eq:costFPDG4}
        P_{T}^{i} =Q_{T}.
    \end{align}
    Assume
    \begin{align}\label{eq:costFPDG2}
        Q_{t} \in \mathbb{S}_{++}^{n}, \Theta_{t} \in \mathbb{S}_{++}^{2m},
    \end{align}
    % \begin{equation}\label{eq:costFPDG2}
    %         \Theta_{t} \in \mathbb{S}_{++}^{Nm},
    %     \end{equation}
    for all $t \in \mathbb{N}_{T}$.
    Furthermore, define
    \begin{align}\label{eq:gainK}
        K_{t} = -\Theta_{t}^{-1}
        \begin{bmatrix}
            B^{1}P_{t+1}^{1}\\
            B^{2}P_{t+1}^{2}
        \end{bmatrix}A,
    \end{align}
    \begin{align}\label{eq:pIteration}
        P_{t}^{i} = Q_{t} + K_{t}^{\transpose}R_{t}^{i}K_{t}+ (A+BK_{t})^{\transpose}P_{t+1}^{i}(A+BK_{t}).
    \end{align}
    We further assume the following,
    % \begin{equation}\label{eq:costFPDG1}
    %     [R_{t}^{i}]_{ij} + B^{i\transpose}P_{t+1}^{i}B^{j} = ([R_{t}^{j}]_{ji} + B^{j\transpose}P_{t+1}^{j}B^{i})^{\transpose},
    % \end{equation}
    \begin{align}\label{eq:costFPDG1}
        [R_{t}^{1}]_{12} + B^{1\transpose}P_{t+1}^{1}B^{2} = ([R_{t}^{2}]_{21} + B^{2\transpose}P_{t+1}^{2}B^{1})^{\transpose},
    \end{align}
    
        \begin{align}\label{eq:costFPDG3}
\mathbf{B}^{\transpose}P_{t}^{1}A=\mathbf{B}^{\transpose}P_{t}^{2}A.
        \end{align}
\end{assumption}
The next lemma connects the LQ-DFG parameters defined in the above assumption to the concept of LQ-DFPG. 
\begin{lemma}
    Consider the LQ-DFG with parameters $A,\contB,(Q_{t})_{t=1}^{T},(R_{t}^{1},R_{t}^{2})_{t=1}^{T-1}$ that satisfy Assumption \ref{assumption:parameters}, then
    \begin{enumerate}[i)]
        \item
        The LQ-DFG is an LQ-DFPG
        \item 
        The LQ-DFPG has a unique feedback Nash equilibria
        \item 
        For any player control policy $(\Pi_{t})_{t=1}^{T-1}$, the notion of PoU reduces to \eqref{eq:uniquePoU}.
    \end{enumerate}
\end{lemma}
\begin{proof}
    For i) and ii), please see the proof of \cite[Theorem 6]{prasad_structure_2023}. Further, we can deduce iii) from i) and ii).
\end{proof}

We make the following assumption for the cost matrices.
\begin{assumption}\label{assumption:bounds}
    For $t\in \mathbb{N}_{T-1}, i\in \{1,2\}$, $Q_{t+1}\in \mathbb{S}_{++}^{n}$ and $R_{t}^{i} \in \mathbb{S}_{+}^{m}$. Further, there exist $Q_{min}, Q_{max}\in \mathbb{S}_{++}^{n}$ and $R^{'}_{min}, R^{'}_{max}\in \mathbb{S}_{+}^{m}$, such that,
    \begin{align*}
        Q_{min} &\preceq Q_{t+1} \preceq Q_{max},
    \end{align*}
    \begin{align}\label{eq:positiveR}
        R^{'}_{min} \preceq R_{t}^{i} \preceq R^{'}_{max}.
    \end{align}
\end{assumption}

We also require the following (mild) assumption on the system dynamics \eqref{eq:linsys}.

\begin{assumption}\label{assumption:controllable}
    The state transition matrix $A$ from \eqref{eq:linsys} has full rank, and there exists a $\bar{K}$, such that $\rho(A + \contB \bar{K}) < 1$.
\end{assumption}
The above assumption ensures that there exists a stabilizing controller $\bar{K}$ for the system $(A,\mathbf{B})$.
The next corollary reveals property associated with cost matrices $R_{t}^{i}$.
\begin{lemma}\label{lemma:symmetryR}
    Under Assumption \ref{assumption:controllable}, the matrix 
    \begin{align}\label{eq:potentialR}
        R_{t}^{p} :=\begin{bmatrix}
            [R_{t}^{1}]_{11} & [R_{t}^{1}]_{12}\\
            [R_{t}^{2}]_{21} & [R_{t}^{2}]_{22}
        \end{bmatrix}
    \end{align}
    is symmetric for $t\in \mathbb{N}_{T-1}$.
\end{lemma}
\begin{proof}
    Under Assumption \ref{assumption:controllable}, \eqref{eq:costFPDG1} and \eqref{eq:costFPDG3} imply $ [R_{t}^{1}]_{12}-[R_{t}^{2}]_{21}^{\transpose} + B^{1\transpose}(P_{t+1}^{1}-P_{t+1}^{2})B^{2} = [R_{t}^{1}]_{12}-[R_{t}^{2}]_{21}^{\transpose}=0$. Moreover, under Assumption \ref{assumption:bounds}, $[R_{t}^{1}]_{11}$ and $[R_{t}^{2}]_{22}$ are symmetric. Therefore,
    \begin{align*}
        \begin{bmatrix}
            [R_{t}^{1}]_{11} & [R_{t}^{1}]_{12}\\
            [R_{t}^{2}]_{21} & [R_{t}^{2}]_{22}
        \end{bmatrix}^{\transpose} = 
        \begin{bmatrix}
            [R_{t}^{1}]_{11}^{\transpose} & [R_{t}^{2}]_{21}^{\transpose}\\
             [R_{t}^{1}]_{12}^{\transpose}& [R_{t}^{2}]_{22}^{\transpose}
        \end{bmatrix} =  \begin{bmatrix}
            [R_{t}^{1}]_{11} & [R_{t}^{1}]_{12}\\
            [R_{t}^{2}]_{21} & [R_{t}^{2}]_{22}
        \end{bmatrix}.
    \end{align*}
    Thus, $R_{t}^{p}$ is symmetric.
\end{proof}

We also make the following assumptions on cost matrices.
\begin{assumption}\label{assumption:positiveRp}
    Matrix $R_{t}^{p}$ in \eqref{eq:potentialR} is positive definite.
\end{assumption}

\begin{assumption}\label{assumption:lowerQ}
    For any given $t \in \mathbb{N}_{T-1}, i,j\in \{1,2\}$, 
    % define
    % \begin{align*}
    %     [\mathbf{R}_{t}]_{ij} = 
    %     \begin{bmatrix}
    %         0 & 0\\
    %         0 & [R_{t}^{2}]_{22}-[R_{t}^{1}]_{22}
    %     \end{bmatrix}
    % \end{align*}
    let $q := \frac{\sigma_{\max}^{+}(A)}{\sigma_{\min}^{+}(\contB)}\lambda_{max}(R_{t}^{p}-R_{t}^{1})$.
    Matrices $Q_{t}$ and $R_{t}^{p}$ satisfy $\lambda_{min}(Q_{t}) > |q|$.
        % $\lambda_{min}(Q_{t}) > \max(0,\sigma_{max}(\contB)\sigma_{max}(A)\lambda_{max}(\mathbf{R_{t}}))$.
\end{assumption}
% {\color{cyan} the above is the correct version of this lemma. The term $\frac{\sigma_{max}(A)}{\sigma_{min}(\contB)}$ is the maximum possible energy of a deadbeat controller for a linear system. So as long as the eigenvalue of $Q_{t}$ is sufficiently large, which the lower bound is characterised by the deadbeat controller. But the term $\mathbf{R_{t}}$ is still unclear for me that why we should consider subtraction between different $R_{t}^{i}$s.}

Assumptions \ref{assumption:bounds} and \ref{assumption:positiveRp} ensure that the eigenvalues for all cost matrices are positive and finite. This guarantees that all cost matrices in the LQ-OCP associated with the LQ-DFG are bounded, which coincides with standard assumption in LQR control literature, e.g., \cite[Assumption 1]{chen_regret_2023}, \cite[Assumption 1]{sun_receding-horizon_2023}. See Corollary \ref{corollary:boundedR}, and \ref{corrolary:boundedK} in the Appendix. 
Assumption \ref{assumption:lowerQ} ensures that the cost matrices in the LQ-OCP associated with the LQ-DFG are positive definite, see Corollary \ref{corollary:positiveQ} from the Appendix. 
% {\color{red} That is, when the eigenvalues of $Q_{t}$ are sufficiently large, the costs associated with large (magnitude) state values will dominate the costs associated with controls.}
% Note that using the matrix $R_{t}^{1}$ in $\mathbf{R_{t}}$ is not a special case. Equivalent formulation of $\mathbf{R}_{t}$ is, when replacing $R_{t}^{1}$ is, define
% \begin{equation}
%     \mathbf{R}_{t} := \mathbf{R}_{t}^{p} - R_{t}^{2},
% \end{equation}
% where $R_{t}^{p}$ is defined in \eqref{eq:potentialR}. 

To state the final assumption, we need to define the following parameters to help us simplify presentations. For a given preview horizon $W\in \mathbb{N}_{T-1} \cup \{0\}$, define
    \begin{equation}\label{eq:informationQ}
        Q_{\tau+1|t}:= 
        \begin{cases}
            Q_{\tau+1} \text{ if $1\leq \tau \leq t+W$}\\
            Q_{t+W+1} \text{ if $t+W < \tau \leq T-1$},
        \end{cases}
    \end{equation}
    and
    \begin{equation}\label{eq:informationR}
        R_{\tau|t}^{i}:= 
        \begin{cases}
            R_{\tau}^{i} \text{ if $1\leq \tau \leq t+W$}\\
            R_{t+W}^{i} \text{ if $t+W < \tau \leq T-1$},
        \end{cases}
    \end{equation}
    for $i \in \{1,2\}$ and $t\in \mathbb{N}_{T-1}$.

Our final assumption is then as follows.
\begin{assumption}\label{assumption:potentialRepeat}
    The LQ-DFG with parameters $\{Q_{\tau+1|t}\}_{\tau=1}^{T-1}$ and $\{R_{\tau|t}^{1},R_{\tau|t}^{2}\}_{\tau=1}^{T-1}$ is an LQ-DFPG for all $t\in \mathbb{N}_{T-1}$.
\end{assumption}
%Assumptions \ref{assumption:parameters}-\ref{assumption:potentialRepeat} is,
The above assumption ensures that when the last known cost matrices are repeated at time $t$ for the next $T-t$ stages, the LQ-DFG corresponds to an LQ-DFPG. 
A sufficient condition of an LQ-DFG that satisfies the above assumption is,
consider cost matrices $Q_{t}$ and $R_{t}^{i}$ as the form of
\begin{align}\label{eq:costSelection}
    &Q_{t} = 
    \begin{bmatrix}
        l_{t} & -d_{t}\\
        -d_{t} & 0
    \end{bmatrix},
    R_{t}^{1} = 
    \begin{bmatrix}
        r_{1,t} & 0\\
        0 & 0
    \end{bmatrix},\\
    &R_{t}^{2} =
    \begin{bmatrix}
        0 & 0\\
        0 & r_{2,t}
    \end{bmatrix},
    \mathbf{B} = 
    \begin{bmatrix}
        -b_{1} & -b_{2}\\
        0 & 0
    \end{bmatrix}\notag,
\end{align}
where $l_{t} > 0,d_{t} > 0,r_{1,t}\geq 0$ and $r_{2,t} \geq 0$.
Further, if $A$ is full rank and $\frac{b_{1}^{2}}{r_{1,t}} = \frac{b_{2}^{2}}{r_{2,t}}$ for all $t\in \{1,2\}$, then the LQ-DFG defined by the above parameters satisfies Assumptions \ref{assumption:parameters}, \ref{assumption:controllable}, \ref{assumption:positiveRp}, and \ref{assumption:potentialRepeat}. Additionally, we can meet Assumptions \ref{assumption:bounds} and \ref{assumption:lowerQ} by appropriately selecting bounds for elements in $Q_{t},R_{t}^{1}$ and $R_{t}^{2}$. We use this example to illustrate our proposed algorithm numerically in Section \ref{sec:numerical}. For the formal statement and proof, see Proposition \ref{prop:valid} in the Appendix.
 
 % Thus, in many cases, we find that this assumption holds and can be verified using the structure of $Q_{t+1},R_{t}^{1},R_{t}^{2},A,\contB$ for $t\in \mathbb{N}_{T-1}$ (with knowledge of their specific numerical values). A particular example is presented in Section \ref{sec:numerical}.

In the next section, we present our algorithm for finding a control policy and provide the price of uncertainty lower and upper bound under such an algorithm.

\section{Proposed Approach and PoU Analysis}\label{sec:approach}
In this section, we propose an algorithm for solving our proposed LQ-DFPG problem and analyse its PoU.

\subsection{Algorithm}

Our algorithm involves two steps at each time step $t$ (for each player) --- a prediction step and a tracking step.

\paragraph{Prediction} 
We first predict a candidate feedback Nash equilibrium trajectory using the current cost matrices and setting the future unknown cost matrices equal to the known value at time $t+W$. That is, define $\bar{\mathcal{H}}_{t} := (\bar{x}_{1}, (Q_{\tau+1|t})_{\tau=1}^{T-1},(R_{\tau|t}^{1},R_{\tau|t}^{2})_{\tau=1}^{T-1})$,
% \begin{equation*}
%     \bar{\mathcal{H}}_{t} := (\bar{x}_{1}, (Q_{\tau+1|t})_{\tau=1}^{T-1},(R_{\tau|t}^{1},R_{\tau|t}^{2})_{\tau=1}^{T-1}),
% \end{equation*}
where $Q_{\tau+1|t},R_{\tau|t}^{1},R_{\tau|t}^{2}$ are defined in \eqref{eq:informationQ} and \eqref{eq:informationR}. Note that the above cost matrices provide a valid structure for the LQ-DFG to be LQ-DFPG based on Assumption \ref{assumption:potentialRepeat}. Moreover, when $T-W < t \leq T-1$, $\bar{\mathcal{H}}_{t} = \mathcal{I}$. This suggests that when enough information is provided, we can find such candidate trajectory that is identical to the feedback Nash equilibrium trajectory obtained with full information $\mathcal{I}$. 
The predicted feedback Nash equilibrium trajectories given the information available at $t$ are
\begin{align}\label{eq:predictionDFL}
    ((x_{\tau|t})_{\tau=1}^{T},(\mathbf{u}_{\tau|t})_{\tau=1}^{T-1}) = \text{DFG}(\bar{\mathcal{H}}_{t},T,A,\contB),
\end{align}
where the operator $\text{DFG}$ defined in \eqref{eq:DFG} that uses $\bar{\mathcal{H}}_{T}$ to output trajectories and controls of the corresponding feedback Nash equilibrium solution from two players.

% generate a feedback Nash equilibrium, for the two-player LQ dynamic game with dynamics \eqref{eq:linsys}, horizon $T$, and the cost functions \eqref{eq:LQcost} with the cost matrices in $\bar{\mathcal{H}}_{t}$.

\paragraph{Tracking} 
Given the predicted feedback Nash equilibrium state and control trajectories \eqref{eq:predictionDFL}, we propose tracking them using a feedback control policy of the form

% \begin{align}\label{eq:policy}
%     \mathbf{u}_{t}
%     = \pi_{t}(x_{t}, \bar{\mathcal{H}}_{t}) := \mathbf{u}_{t|t}+\bar{K}(x_{t}-x_{t|t}),
% \end{align}
% where $\bar{K}\in \mathbb{R}^{m\times n}$ is a gain matrix such that $\rho(A+\mathbf{B}\bar{K}) < 1$. Policy \eqref{eq:policy} can be written in the player-specific form
\begin{align}\label{eq:policy}
\mathbf{u}_{t} = 
\begin{bmatrix}
    \pi_{t}^{1}(x_{t}, \bar{\mathcal{H}}_{t})\\
    \pi_{t}^{2}(x_{t}, \bar{\mathcal{H}}_{t})
\end{bmatrix}=
    \begin{bmatrix}
        [\bar{K}]_{1}\\
        [\bar{K}]_{2}
    \end{bmatrix}(x_{t}-x_{t|t}) + 
    \begin{bmatrix}
        [\mathbf{u}_{t|t}]_{1}\\
        [\mathbf{u}_{t|t}]_{2}
    \end{bmatrix},
\end{align}
where $\bar{K}\in \mathbb{R}^{m\times n}$ is a gain matrix such that $\rho(A+\mathbf{B}\bar{K}) < 1$, $\pi_{t}^{i}(x_{t}, \bar{\mathcal{H}}_{t})$ is the policy for the $i$-th player for $i \in \{1,2\}$, $t \in \mathbb{N}_{T-1}$, and $[\bar{K}]_{i}$ and $[\mathbf{u}_{t|t}]_{i}$ have appropriate dimensions.
In the next section, we show the PoU bounds for this algorithm.

% {\color{blue} CAn you explain how this maps to the control inputs $u_t^i$ of individual players?}

\subsection{\emph{PoU} Analysis}
The following theorem establishes PoU lower and upper bounds for our proposed algorithm.
% {\color{red} motivate/introduce the following regret bound}

\begin{theorem}[PoU Bounds]\label{thm:main}
    Consider the linear system \eqref{eq:linsys}, a given time horizon $T \geq 1$, and a preview window length $W\in \mathbb{N}_{T-1}\cup \{0\}$. For time $t\in \mathbb{N}_{T-1}$ and $i\in \{1,2\}$, consider policy \eqref{eq:policy}, under Assumptions \ref{assumption:parameters}--\ref{assumption:potentialRepeat}, the PoU defined by \eqref{eq:regret} satisfies
    \begin{align}\label{eq:mainBound}
        -C_{1}^{'}\gamma^{2W} \!\!-\!C_{2}^{'}\gamma^{W} \!\!-\!C_{3}^{'}\varepsilon_{K} &< \!\text{PoU}_{T}((\Pi_{t})_{t=1}^{T-1})\\
        &\!<\! C_{1}\gamma^{2W}\! +\! C_{2}\gamma^{W}\!+\! C_{3}\varepsilon_{K}\notag,
    \end{align}
    where $\gamma \in (0,1)$ and $\varepsilon_{K}$ is monotonically increasing w.r.t. $\gamma$, $\|\contB\|$, and $\max_{t}|\lambda_{max}([R_{t}^{2}]_{22}-[R_{t}^{1}]_{22})|$. Moreover, constants $C_{1}, C_{2},C_{3}, C_{1}^{'},C_{2}^{'}$ and $C_{3}^{'}$ are monotonically increasing w.r.t. $\lambda_{max}(R_{max}^{'})$ and $\lambda_{max}(Q_{max})$, and the inverse of $\rho(A+\contB\bar{K})$, $\lambda_{min}(R_{min}^{'})$ and $\lambda_{min}(Q_{min})$. 
\end{theorem}
\begin{proof}
 Please see the Appendix.
\end{proof}
\begin{remark}
Following Remark \ref{remark:optimalControl}, in this special case of LQ game we have $C_{2}^{'} = 0$, $C_{3}^{'} = 0$, $\varepsilon_{K} = 0$ and $C_{2} = 0$. Moreover, 
\begin{align*}
    0 \leq \text{PoU}_{T}((\Pi_{t})_{t=1}^{T-1})< C_{1}\gamma^{2W}.
\end{align*}
In this work, $C_{1}\gamma^{2W}$ is a slightly pessimistic upper bound of $\text{PoU}$.
A less pessimistic upper bound can be found by the RHS of dynamic regret bound in \cite[Equation (15)]{chen_regret_2023}. 
\end{remark}

Theorem \ref{thm:main} provides an insight into the relationship between PoU and the preview-window length $W$.
Specifically, we see that for a fixed $\mathcal{I}$, the terms $C_{1}^{'}\gamma^{2W},C_{2}^{'}\gamma^{W},C_{1}\gamma^{2W}$ and $C_{2}\gamma^{W}$ in the PoU bound decays exponentially fast as the preview-window length $W$ increases. Moreover, when $W$ is sufficiently large, the PoU lower and upper bounds are dominated by $-C_{3}^{'}\varepsilon_{K}$ and $C_{3}\varepsilon_{K}$, respectively.

We shall illustrate the exponential decay of the absolute value of PoU with preview-window length next in simulations.

Before presenting our simulation result, we note that a slightly modified PoU notion
termed \emph{log-relative PoU} can be used to describe the relative error between the decisions made from the algorithm and the (value of the) feedback Nash equilibrium
solution.  Specifically, for any sequence of 
policies $(\Pi_{t})_{t=1}^{T-1}$, define the
\emph{log relative PoU} as \begin{align*}
    \text{logRelPoU}((\Pi_{t})_{t=1}^{T-1}):= \log \bigg|\frac{\text{PoU}_{T}((\Pi_{t})_{t=1}^{T-1})}{\frac{1}{2}\sum_{i=1}^{2} J_{i,T}(\bar{x}_{1},(\Pi_{t}^{*})_{t=1}^{T-1})}\bigg|,
\end{align*}
where policy $(\Pi_{t}^{*})_{t=1}^{T-1}$ generates the feedback Nash equilibrium trajectory as defined in \eqref{eq:nashIneq}.
In certain scenarios, the cost incurred by the actual feedback Nash
equilibrium may be large due to the selection of system and cost matrices. Thus,
logarithmic relative PoU can be a more practical measure of the quality of
decisions when we are interested in numerical results.
%%%%%%%%%%%%%%%%%%%%%%%%%%%%%%%%%%%%%%%%%%%%%%%%%%%%%%%%%%%%%%%%%%%%%%%%%%%%%%%%
\section{NUMERICAL SIMULATIONS}\label{sec:numerical}
In this section, we numerically illustrate the performance of our proposed algorithm. 
We consider a slightly modified demand-side management example from the one studied in \cite{wu_demand_2011}. 
In this scenario, two users aim to schedule their electricity usage over a period of $T$. 
Their objective is to minimise their usage costs while maintaining a solar-powered community battery at a nominal level at time $t$. 
%The generation cost is modelled as a quadratic function, proportional to the square of the battery power level.
The battery level evolves as a function of the current drawn $u_{t}^{i}$ by each user $i \in \{1,2\}$ at time $t$ and a constant recharge rate according to 
\begin{equation*}
    x_{t+1} = 
    \begin{bmatrix}
        a & 0\\
        0 & 0.9
    \end{bmatrix}
    x_{t} + 
    \begin{bmatrix}
        -b_{1} & -b_{2}\\
        0 & 0
    \end{bmatrix}\mathbf{u}_{t},
\end{equation*}
where $a \geq 1$ is the constant recharge rate of the battery (with current drawn from either renewable or nonrenewable sources) and $x_{t}$ is the deviation of the community battery level from its nominal value.
%and $u_{t}^{i}$ for $i=\{1,2\}$ is the amount of current drawn at time instance $t$ from the $i$-th user.
The cost matrices $Q_{t}$ and $R_{t}^{i}$ penalise the deviation of the community battery level from its nominal value and usage costs, respectively. 
Users have knowledge of future cost matrices only over a preview window of length $W \in \mathbb{N}_{T-1}\cup \{0\}$ due to variable environmental conditions affecting battery-recharge costs and the power availability in the battery.

% Let $u_{t}^{i}$ be the amount of food from the $i$-th company delivered to the $i$-th grocery store, while the state variable $x_{t}$ is the number of packages from their interchanging packaging store. Considering the packaging store receives supply of packages in some constant rate $a$, $b_{1}$ and $b_{2}$ are the rate of depleting packages from the first and second distribution companies, respectively. 
% The state equation can be considered as the form 
% \begin{equation*}
%     x_{t+1} = 
%     \begin{bmatrix}
%         a & 0\\
%         0 & 0.9
%     \end{bmatrix}
%     x_{t} + 
%     \begin{bmatrix}
%         -b_{1} & -b_{2}\\
%         0 & 0
%     \end{bmatrix}\mathbf{u}_{t}.
% \end{equation*}
% {\color{red} motivate the associated costs???}
% $R_{t}^{i}$ and $Q_{t}$ are the associated costs to $u_{t}^{i}$ and $x_{t}$, such as transportation and storage costs, respectively. 

% Users have accessed information about the previous costs and limited knowledge of future costs that will be incurred to their products. 

% {\color{red} We consider a network-flow control problem similar to that studied in \cite[Section VI]{zazo_dynamic_2016} in which players are attempting to ... , but with the added realistic complexity that players do not know the future costs that they could incur.}
For the purpose of simulations, we consider the following example by adopting the sufficient condition of LQ-DFG that satisfies our assumptions from Section \ref{sec:assumptions}. Consider $a=1.6,b_{1}=0.85$ and $b_{2} =0.89$.
The preview window $W$ varies from $0$ to $6$, and $T$ ranges from $1$ to $40$. The cost matrices are
\begin{equation*}
    Q_{t} = 
    \begin{bmatrix}
        l_{t} & -d_{t}\\
        -d_{t} & 0
    \end{bmatrix},\
    R_{t}^{1} = 
    \begin{bmatrix}
        r_{1,t} & 0\\
        0 & 0
    \end{bmatrix},\
    R_{t}^{2} = 
    \begin{bmatrix}
        0 & 0\\
        0 & r_{2,t}
    \end{bmatrix},
\end{equation*}
% \begin{equation}
%     R_{t}^{1} = 
%     \begin{bmatrix}
%         r_{1,t} & 0\\
%         0 & 0,
%     \end{bmatrix}
% \end{equation}
% \begin{equation}
%     R_{t}^{2} = 
%     \begin{bmatrix}
%         0 & 0\\
%         0 & r_{2,t},
%     \end{bmatrix}
% \end{equation}
where $\frac{b_{1}^{2}}{r_{1,t}}=\frac{b_{2}^{2}}{r_{2,t}} = \beta_{t}$. The above parameters satisfy Assumption \ref{assumption:parameters}-\ref{assumption:potentialRepeat}, and the proof can be found in the Appendix. We consider cost matrices drawn randomly according to $\beta_{t} \sim \mathcal{U}[10,110]$, $l_{t} \sim \mathcal{U}[10,110]$ and $d_{t}\sim \mathcal{U}[-110,-10]$, where $\mathcal{U}[\cdot,\cdot]$ denotes the uniform distribution. 

The plot in Figure \ref{fig:1relPreview} reports the average of \emph{log relative PoU} versus time horizon $T$ from $T = 1$ to $35$ with preview window length fixed at $W=1$. From Figure \ref{fig:1relPreview}, we see that the (average) \emph{log relative PoU} grows from $T = 1$ to $T = 15$, and saturates at an approximate value of $-4$ at $T \geq 15$. The saturation of the average \emph{log relative PoU} is consistent with the independence of time horizon $T$ and PoU lower and upper bounds observed in Theorem \ref{thm:main} for sufficiently large $T$. 

The plot in Figure \ref{fig:20relTimehorizon} reports the average of \emph{log relative PoU} versus preview window length $W$ from $W = 0$ to $6$ with time horizon fixed at $T=20$. From Figure \ref{fig:20relTimehorizon}, we see that the (average) \emph{log relative PoU} decays at a faster-than-exponential rate from $W =0$ to $6$. The decay of the \emph{log relative PoU} is consistent with the relation between preview window length $W$ and the PoU bounds established in Theorem \ref{thm:main}.

\begin{figure}
        \label{fig:experiments}
     \subfigure[logRelPoU vs. Time Horizon in $W=1$]{
         \includegraphics[width=.45\textwidth]{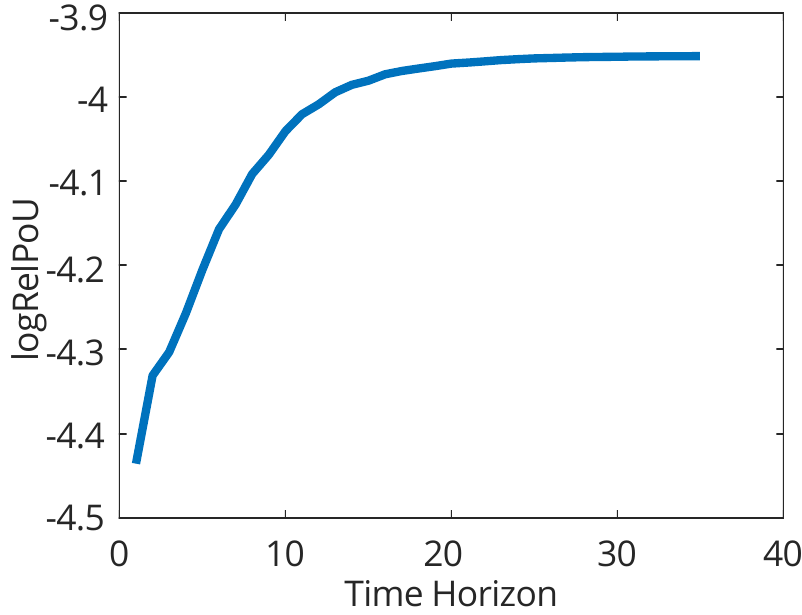}
    \label{fig:1relPreview}}
    % \\
     \subfigure[logRelPoU vs. Preview Window Length in $T=20$]{
         \includegraphics[width=.45\textwidth]{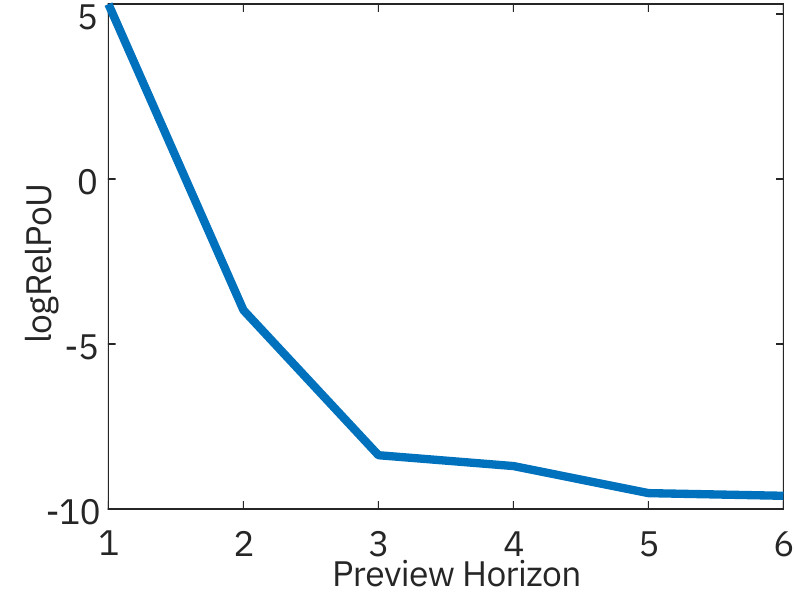}
    \label{fig:20relTimehorizon}}
    %  \subfigure[logRelRegret vs. Preview Window Length in $T = 35$]{
    %      \includegraphics[width=.22\textwidth]{Conference/relRegrettime35.pdf}
    % \label{fig:35relTimehorizon}}
    \caption{Performance measure $\text{logRelPoU}_{T}$ for simulated systems.}
\end{figure}
\section{Conclusion and Future Work}\label{sec:conclusions}
This paper presents a two-player dynamic feedback potential game problem with sequentially revealed costs and proposes a novel algorithm for solving it. The notion of the price of uncertainty is introduced to derive PoU lower and upper bounds, quantifying the performance of the proposed algorithm. 
Future work will extend the analysis to $N$-players, and establish the connections between PoU and the $\varepsilon$-Nash equilibrium.
%%%%%%%%%%%%%%%%%%%%%%%%%%%%%%%%%%%%%%%%%%%%%%%%%%%%%%%%%%%%%%%%%%%%%%%%%%%%%%%%
% \section{ACKNOWLEDGMENTS}
% The authors gratefully acknowledge the contribution of National Research Organization and reviewers' comments.
%%%%%%%%%%%%%%%%%%%%%%%%%%%%%%%%%%%%%%%%%%%%%%%%%%%%%%%%%%%%%%%%%%%%%%%%%%%%%%%%
\bibliographystyle{plain}
\bibliography{References}
\appendix
Before stating the proof of Theorem~\ref{thm:main}, in the next subsection, we introduce the necessary lemmas and propositions. 
\section{Auxiliary Lemmas}
\begin{lemma}[Theorem 6, \cite{prasad_structure_2023}]\label{lemma:parametersPotentialGame}
    Suppose parameters $A$, $\mathbf{B}$, $Q_{t+1}$, $R_{t}^{i}$ satisfy Assumption \ref{assumption:parameters} for $i\in \{1,2\}$ and $t\in \mathbb{N}_{T-1}$, 
    then, the LQ-DFG as defined by the parameters in \eqref{eq:DFG} constitutes an LQ-DFPG as described in Definition \ref{def:LQ-DFPG}.
\end{lemma}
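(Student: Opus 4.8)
\emph{Proof proposal.} This is the two--player case of \cite[Theorem~6]{prasad_structure_2023}, so the statement may simply be quoted from there; for completeness I would argue as follows. The plan is to exhibit an explicit LQ--OCP in the sense of Definition~\ref{def:LQ-OCP} and then show, by a backward--in--time comparison of Riccati recursions, that its optimal feedback policy satisfies the feedback--Nash conditions \eqref{eq:nashIneq0}--\eqref{eq:nashIneq}. First recall that the quantities $\Theta_t$, $K_t$, $P^1_t$, $P^2_t$ of Assumption~\ref{assumption:parameters} form the coupled--Riccati characterisation of feedback Nash for a two--player LQ--DFG: stacking the two players' stage--$t$ stationarity conditions (each obtained with the opponent's gain frozen) gives
\begin{equation*}
\Theta_t \mathbf{u}_t = -\begin{bmatrix} B^{1\transpose}P^1_{t+1}\\ B^{2\transpose}P^2_{t+1}\end{bmatrix} A\, x_t ,
\end{equation*}
with unique solution $\mathbf{u}_t=K_t x_t$ (since $\Theta_t\succ 0$ by \eqref{eq:costFPDG2}), the $P^i_t$ obeying \eqref{eq:pIteration} and \eqref{eq:costFPDG4}; whereas the LQ--OCP \eqref{eq:LQ-OCP} with running weights $(\bar Q_t,\bar R_t)$ and $\bar P_T=\bar Q_T$ has optimal feedback $\bar K_t = -(\bar R_t+\contB^{\transpose}\bar P_{t+1}\contB)^{-1}\contB^{\transpose}\bar P_{t+1}A$.

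I would then take $\bar R_t$ to be the block--assembled potential matrix $R^p_t$ of \eqref{eq:potentialR} --- symmetric by Corollary~\ref{corollary:symmetryR}, so $[\bar R_t]_{ij}=[R_t^i]_{ij}$ --- together with a matching quadratic potential $\bar Q_t$ of the state weights, and prove by backward induction on $t$, based at $\bar P_T=Q_T=P^1_T=P^2_T$ (cf.\ \eqref{eq:costFPDG4}), a consistency identity between the OCP value matrix $\bar P_t$ and the game matrices $P^i_t$ --- roughly $\contB^{\transpose}\bar P_tA=\contB^{\transpose}P^1_tA=\contB^{\transpose}P^2_tA$, the second equality being precisely \eqref{eq:costFPDG3}. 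Under that identity the right--hand sides of the OCP and game stationarity conditions coincide, while $\bar R_t+\contB^{\transpose}\bar P_{t+1}\contB$ has $(i,j)$ block $[R_t^i]_{ij}+B^{i\transpose}P^i_{t+1}B^j=[\Theta_t]_{ij}$, hence equals $\Theta_t$ --- which is symmetric by \eqref{eq:costFPDG1} --- so the two gain formulas yield $\bar K_t=K_t$; substituting $\bar K_t=K_t$ back into the OCP Riccati recursion and into \eqref{eq:pIteration} then propagates the consistency identity from index $t+1$ to index $t$, closing the induction.

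With $\bar K_t=K_t$ established for all $t$, the OCP--optimal feedback is a feedback Nash equilibrium: holding player $j$ at gain $[K_t]_j$, the OCP's Bellman/Riccati recursion reduces, on the subspace relevant to its gain, to player $i$'s own best--response recursion, so $[K_t]_i$ is player $i$'s optimal response in every subgame (uniqueness again from $\Theta_t\succ 0$); since this holds for both $i\in\{1,2\}$, none of the inequalities in \eqref{eq:nashIneq0}--\eqref{eq:nashIneq} can be made strict.

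I expect the main obstacle to be pinning down the precise consistency identity and the matching weights $\bar Q_t$: since in general $\bar P_t\ne P^i_t$, the induction must go through for only the part of $\bar P_t$ that actually enters the gain formula, and one must at the same time verify that $\bar Q_t$ and $\bar R_t=R^p_t$ come out positive definite as Definition~\ref{def:LQ-OCP} requires. This is exactly where the structural conditions \eqref{eq:costFPDG1} and \eqref{eq:costFPDG3} (the dynamic analogue of Monderer--Shapley exactness) do the work, and where full rank of $A$ (Assumption~\ref{assumption:controllable}) is used to upgrade \eqref{eq:costFPDG3} from $\contB^{\transpose}P^1_tA=\contB^{\transpose}P^2_tA$ to $\contB^{\transpose}P^1_t=\contB^{\transpose}P^2_t$, which is what makes the cross--terms $B^{i\transpose}P^i_{t+1}B^j$ in $\Theta_t$ consistent between the two players.
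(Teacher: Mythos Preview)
The paper does not prove this lemma at all; it is stated purely as a citation of \cite[Theorem~6]{prasad_structure_2023} with no accompanying argument. Your opening sentence already identifies this, so simply quoting the reference --- as you do --- matches the paper's ``proof'' exactly.

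The sketch you add for completeness is reasonable and is in fact the content of the two lemmas the paper states immediately afterwards (Lemma~\ref{lemma:gamePGrelation} and the lemma characterising $\Omega$, both also cited from \cite{prasad_structure_2023}): one exhibits $\bar R_t=R^p_t$ and a matching $\bar Q_t$, shows $\bar R_t+\contB^{\transpose}\bar P_{t+1}\contB=\Theta_t$, and runs a backward induction to get $\bar K_t=K_t$. One small caveat: your argument appeals to Corollary~\ref{corollary:symmetryR} and to full rank of $A$, both of which come from Assumption~\ref{assumption:controllable}, whereas the lemma as stated hypothesises only Assumption~\ref{assumption:parameters}. So strictly your sketch proves a slightly narrower statement than the lemma claims --- though it is the version actually used everywhere else in the paper.
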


\begin{lemma}[Theorem 5, \cite{prasad_structure_2023}]\label{lemma:gamePGrelation}
    Consider an LQ-DFPG and an LQ-OCP described by Definitions \ref{def:LQOCP} and Section \ref{section:LQ-DFG}, respectively. Under Assumption \ref{assumption:bounds}, the feedback Nash equilibrium for the LQ-DFPG defined in Definition \ref{eq:DFG} by parameters $\{\bar{x}_{1},\{Q_{t}\}_{t=1}^{T},\{R_{t}^{i}\}_{i=1,t=1}^{2,T-1}\}$, is identical to the solutions of LQ-OCP defined in Definition \ref{def:LQOCP}, by parameters $\{\bar{x}_{1},\{\bar{Q}_{t}\}_{t=1}^{T},\{\bar{R}_{t}\}_{t=1}^{T-1}\}$, if
    \begin{align*}
        &\bar{P}_{T} = \bar{Q}_{T}, \bar{\Theta}_{t} = \bar{R}_{t} + \contB^{\transpose}\bar{P}_{t+1}\contB, \bar{K}_{t} = \bar{\Theta}_{t}^{-1}\contB^{\transpose}\bar{P}_{t+1}A,\\
        &\bar{P}_{t} = \bar{Q}_{t} + \bar{K}_{t}^{\transpose}\bar{R}_{t}\bar{K}_{t} + (A+\contB\bar{K}_{t})^{\transpose}\bar{P}_{t+1}(A+\contB\bar{K}_{t}),\\
        &[R_{t}^{i}]_{ii} + B^{i\transpose}P_{t+1}^{i}B^{i} = [\bar{R}_{t}]_{ii} + B^{i\transpose}\bar{P}_{t+1}B^{i},\\
        &B^{i\transpose}P_{t+1}^{i}A = B^{i\transpose}\bar{P}_{t+1}A, \\
        &\bar{\Theta}_{t} \in \mathbb{S}_{++}^{2m},\\
        &[R_{t}^{i}]_{ij} + B^{i\transpose}P_{t+1}^{i}B^{j} = [\bar{R}_{t}]_{ij} + B^{i\transpose}\bar{P}_{t+1}B^{j},i\neq j,\\
        &[R_{t}^{1}]_{12} + B^{1\transpose}P_{t+1}^{1}B^{2} = ([R^{2}_{t}]_{21} + B^{2\transpose}P^{2}_{t+1}B^{1})^{\transpose},
    \end{align*}
    for $i,j\in \{1,2\}$ and $t \in \mathbb{N}_{T-1}$. 
\end{lemma}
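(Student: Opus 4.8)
The plan is a backward induction on the stage index $t$, showing that the feedback Nash gain of the LQ-DFPG and the LQR gain of the LQ-OCP coincide at every stage; since both problems are driven from the common initial state $\bar x_1$, identical stagewise gains force identical trajectories and hence identical inputs. First I record the two recursions. For the LQ-OCP this is the ordinary discrete-time LQR: the optimal cost-to-go is $x^{\transpose}\bar P_t x$ with $\bar P_T=\bar Q_T$, the minimiser is a linear feedback $\bar K_t x_t$, and $\bar\Theta_t,\bar K_t,\bar P_t$ obey the four relations in the statement; the hypothesis $\bar\Theta_t\in\mathbb{S}_{++}^{2m}$, together with the bounds of Assumption~\ref{assumption:bounds}, makes the completion of squares and the induction valid. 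For the LQ-DFPG, a quadratic cost and the feedback information pattern make each player's equilibrium cost-to-go quadratic, $\tfrac12 x^{\transpose}P_t^i x$ with $P_T^i=Q_T$; writing, at stage $t$ and given the continuation equilibrium, the two simultaneous stationarity conditions $\partial_{u_{i,t}}\big[g_{i,t}(x_t,\mathbf u_t)+\tfrac12(Ax_t+\contB\mathbf u_t)^{\transpose}P_{t+1}^i(Ax_t+\contB\mathbf u_t)\big]=0$, $i\in\{1,2\}$, and stacking them yields the linear system $\Theta_t\mathbf u_t=-\,\mathrm{col}(B^{1\transpose}P_{t+1}^1,B^{2\transpose}P_{t+1}^2)Ax_t$ whose unique solution is $\mathbf u_t=K_t x_t$, with $\Theta_t$, $K_t$ and the Riccati update \eqref{eq:pIteration} as in Assumption~\ref{assumption:parameters}; here \eqref{eq:costFPDG1} makes $\Theta_t$ symmetric, matching the symmetric $\bar\Theta_t$.

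I carry the inductive invariant that, at stage $t+1$, (a) $\mathrm{col}(B^{1\transpose}P_{t+1}^1,B^{2\transpose}P_{t+1}^2)A=\contB^{\transpose}\bar P_{t+1}A$ (which holds at $t+1=T$ because $P_T^i=Q_T$ and $\bar P_T=\bar Q_T$, and is itself one of the listed hypotheses), and (b) the block identities $[R_t^i]_{ij}+B^{i\transpose}P_{t+1}^iB^j=[\bar R_t]_{ij}+B^{i\transpose}\bar P_{t+1}B^j$. Block-by-block, (b) gives $\Theta_t=\bar\Theta_t$, and (a) supplies the right-hand sides of the two gain formulas, so that $K_t=\bar K_t$ (modulo the sign conventions in the stated recursions) and the closed loops $A+\contB K_t$ and $A+\contB\bar K_t$ agree at stage $t$. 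To close the induction I propagate (a)--(b) one step: with $K_t=\bar K_t$ the two Riccati updates give
\begin{align*}
P_t^i-\bar P_t &= (Q_t-\bar Q_t)+K_t^{\transpose}(R_t^i-\bar R_t)K_t\\
&\quad+(A+\contB K_t)^{\transpose}(P_{t+1}^i-\bar P_{t+1})(A+\contB K_t),
\end{align*}
and I split $B^{i\transpose}(P_{t+1}^i-\bar P_{t+1})$ into its action along $\mathrm{range}(A)$, which vanishes by (a), and along $\mathrm{range}(\contB)$, which equals $[\bar R_t]_{ij}-[R_t^i]_{ij}$ by (b); the latter cancels the term $K_t^{\transpose}(R_t^i-\bar R_t)K_t$, while the residual $(Q_t-\bar Q_t)$ term contributes nothing since the associated OCP inherits the common state cost $\bar Q_t=Q_t$, leaving $B^{i\transpose}(P_t^i-\bar P_t)A=0$. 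A parallel manipulation re-establishes (b) at stage $t-1$. Running this from $t=T$ down to $t=1$ gives $K_t=\bar K_t$ for all $t$, hence identical closed loops, trajectories, and inputs, which is the assertion.

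The main obstacle is exactly the propagation step: one must check that this particular bundle of matching conditions --- and no weaker one --- is precisely what makes $B^{i\transpose}(P_t^i-\bar P_t)A$ and the stage-$(t-1)$ $R$-blocks vanish identically, which is a careful but self-contained algebraic verification resting on the block identities, the symmetry relation \eqref{eq:costFPDG1}, and \eqref{eq:costFPDG3}. A secondary point is the well-posedness of the game recursion: once $\Theta_t=\bar\Theta_t$ is established, $\Theta_t$ inherits positive definiteness (hence invertibility) from the hypothesis $\bar\Theta_t\in\mathbb{S}_{++}^{2m}$, so the stacked stationarity system has a unique solution at every stage and the induction is genuinely grounded.
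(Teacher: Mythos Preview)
The paper does not prove this lemma itself; it is quoted verbatim as Theorem~5 of \cite{prasad_structure_2023} and only invoked, so there is no in-paper argument to compare against. That said, your core mechanism is the right one and matches the standard route: the block identities $[R_t^i]_{ij}+B^{i\transpose}P_{t+1}^iB^j=[\bar R_t]_{ij}+B^{i\transpose}\bar P_{t+1}B^j$ give $\Theta_t=\bar\Theta_t$, the identity $B^{i\transpose}P_{t+1}^iA=B^{i\transpose}\bar P_{t+1}A$ equates the stacked right-hand sides of the gain formulas, and hence $K_t=\bar K_t$ at every stage; common gains from a common initial state force identical trajectories and inputs.

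Where you overreach is in the ``propagation'' step. In the lemma as stated, the matching conditions are \emph{hypotheses assumed for every $t\in\mathbb{N}_{T-1}$}, not invariants to be manufactured from a terminal seed. There is therefore nothing to propagate: at each fixed $t$ the listed equalities already hand you $\Theta_t=\bar\Theta_t$ and the matched numerators, so $K_t=\bar K_t$ follows directly, stage by stage, with no backward induction beyond the standard Riccati recursions that define $P_t^i$ and $\bar P_t$. Your attempt to re-derive (a)--(b) at stage $t$ from stage $t+1$ is both unnecessary and, as written, incorrect: you assert that ``the residual $(Q_t-\bar Q_t)$ term contributes nothing since the associated OCP inherits the common state cost $\bar Q_t=Q_t$,'' but the lemma imposes no such equality, and in fact the companion construction (the paper's next lemma, also from \cite{prasad_structure_2023}) takes $\bar Q_t=Q_t+K_t^{\transpose}(R_t^1-\bar R_t)K_t\neq Q_t$ in general. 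Drop the propagation paragraph entirely; what remains is a clean and correct argument.
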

\begin{lemma}[Theorem 7, \cite{prasad_structure_2023}]\label{lemma:prasadThm7}
    For a given integer $T\geq 1$, define
    \begin{align*}
        \Omega := ((\bar{Q}_{1},\bar{Q}_{2},\cdots,\bar{Q}_{T},\bar{R}_{1},\cdots,\bar{R}_{T-1})|\mathcal{K}),
    \end{align*}
    where $\mathcal{K}$ is the set of conditions that, for time $t \in \mathbb{N}_{T}$, $\bar{P}_{t}$ is such that
            $\mathbf{B}^{\transpose}\bar{P}_{t}A = \mathbf{B}^{\transpose}P_{t}^{i}A$ for $i \in \{1,2\}$.
       Define $\bar{R}_{t}$ as
            \begin{align}\label{eq:matrixR}
            \bar{R}_{t} := \Theta_{t} - \mathbf{B}^{\transpose}\bar{P}_{t+1}\mathbf{B},
            \end{align}
        where $\Theta_{t}$ is defined in \eqref{eq:Theta},
        with $\bar{Q}_{t}$ such that
            $\bar{Q}_{t} = Q_{t} + K_{t}^{\transpose}(R_{t}^{1}-\bar{R}_{t})K_{t}$,
        and $K_{t}$ is defined in \eqref{eq:gainK}.
        % \begin{equation*}
        %     K_{t} = \Theta_{t}^{-1}
        %     \begin{bmatrix}
        %         B_{t}^{1\transpose}P_{t+1}^{1}\\
        %         B_{t}^{2\transpose}P_{t+1}^{2}
        %     \end{bmatrix}
        %     A,\quad t\in \mathbb{N}_{T-1}.
        % \end{equation*}
    The set $\Omega$ is non-empty, and every element in $\Omega$ leads to an LQ-OCP with a solution identical to the solution of an LQ-DFPG.
    %the above was: The set $\Omega$ is non-empty, and every element in $\Omega$ leads to an LQ-OCP that is equivalent to an LQ-DFPG.
\end{lemma}
\begin{remark}\label{remark:rp}
    Matrix $\bar{R}_{t}$ defined in \eqref{eq:matrixR} satisfies $\bar{R}_{t} = R_{t}^{p}$.
\end{remark}
\begin{corollary}\label{corollary:boundedR}
    % There exists matrices $\bar{R}_{min},\bar{R}_{max} \in \mathbf{S}_{++}^{m}$, that the cost matrix defined in \eqref{eq:matrixR} satisfy the following
    Under Assumptions \ref{assumption:controllable} and \ref{assumption:bounds}, $\bar{R}_{t}$ defined in \eqref{eq:matrixR} satisfies $0 \prec R_{min} \preceq \bar{R}_{t} \preceq R_{max}$,
    for $t \in \mathbb{N}_{T-1}$.
\end{corollary}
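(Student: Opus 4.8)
The plan is to first identify $\bar{R}_{t}$ in closed form and then bound the resulting matrix using the stated assumptions; throughout I use the standing Assumption~\ref{assumption:parameters} (so that the LQ-DFG is an LQ-DFPG by Lemma~\ref{lemma:parametersPotentialGame}) and the conclusion of Corollary~\ref{corollary:symmetryR}.

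The first, and I expect main, step is to show $\bar{R}_{t}=R_{t}^{p}$, with $R_{t}^{p}$ as in \eqref{eq:potentialR}. Expanding \eqref{eq:matrixR} block by block with \eqref{eq:Theta} gives $[\bar{R}_{t}]_{ij}=[R_{t}^{i}]_{ij}+B^{i\transpose}(P_{t+1}^{i}-\bar{P}_{t+1})B^{j}$ for $i,j\in\{1,2\}$. Among the conditions realising the equivalence between the LQ-DFPG and its associated LQ-OCP (Lemma~\ref{lemma:gamePGrelation}, for the $\bar{R}_{t}$ constructed as in \cite{prasad_structure_2023}) is $B^{i\transpose}P_{t+1}^{i}A=B^{i\transpose}\bar{P}_{t+1}A$; since $A$ is invertible by Assumption~\ref{assumption:controllable}, this forces $B^{i\transpose}(P_{t+1}^{i}-\bar{P}_{t+1})=0$, hence $[\bar{R}_{t}]_{ij}=[R_{t}^{i}]_{ij}$ and therefore $\bar{R}_{t}=R_{t}^{p}$. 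The delicate point here is being precise about which equivalence conditions are actually enforced by the construction in \cite{prasad_structure_2023}; this reduction is genuinely needed, because the $\bar{R}_{t}$ coming from that general construction is not a priori positive definite, so the bounds cannot be read off directly from $\Theta_{t}$ and $\bar{P}_{t+1}$.

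For the upper bound I would use that $R_{t}^{p}$ is symmetric (Corollary~\ref{corollary:symmetryR}) and that, with each $R_{t}^{i}$ symmetric, one has the identity $R_{t}^{p}=R_{t}^{1}+\mathbf{R}_{t}$ with $\mathbf{R}_{t}$ as in Assumption~\ref{assumption:lowerQ} (the $21$-block of $R_{t}^{p}$ equals $[R_{t}^{1}]_{12}^{\transpose}=[R_{t}^{1}]_{21}$, so $R_{t}^{p}$ and $R_{t}^{1}$ differ only in the $22$-block). Assumption~\ref{assumption:bounds} gives $R_{t}^{1}\preceq R^{'}_{max}$, $0\preceq[R_{t}^{1}]_{22}$ and $[R_{t}^{2}]_{22}\preceq\lambda_{max}(R^{'}_{max})I$, whence $\mathbf{R}_{t}\preceq\lambda_{max}(R^{'}_{max})I$ and thus $\bar{R}_{t}=R_{t}^{p}\preceq R^{'}_{max}+\lambda_{max}(R^{'}_{max})I=:R_{max}\succ 0$ (in particular $\lambda_{max}(\bar{R}_{t})\leq 2\lambda_{max}(R^{'}_{max})$), which is the form in which the bound feeds into the monotonicity statements on $C_{1},C_{2}$ in Theorem~\ref{thm:main}.

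For the lower bound, Assumption~\ref{assumption:positiveRp} gives $\bar{R}_{t}=R_{t}^{p}\succ 0$ for each $t$, and since $t$ ranges over the finite index set $\mathbb{N}_{T-1}$ the number $\underline{r}:=\min_{t\in\mathbb{N}_{T-1}}\lambda_{min}(R_{t}^{p})$ is strictly positive; taking $R_{min}:=\underline{r}\,I$ then gives $0\prec R_{min}\preceq R_{t}^{p}=\bar{R}_{t}$ for all $t\in\mathbb{N}_{T-1}$. Note that one cannot in general sharpen $R_{min}$ to $R^{'}_{min}$, because $\mathbf{R}_{t}$ may be indefinite, so finiteness of the horizon is used here. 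Combining the two bounds yields $0\prec R_{min}\preceq\bar{R}_{t}\preceq R_{max}$ for $t\in\mathbb{N}_{T-1}$, as claimed.
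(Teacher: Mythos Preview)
Your approach is essentially the same as the paper's: both reduce $\bar{R}_{t}$ to $R_{t}^{p}$ by expanding \eqref{eq:matrixR} blockwise and then cancelling $B^{i\transpose}(P_{t+1}^{i}-\bar{P}_{t+1})$ via the equivalence condition $B^{i\transpose}P_{t+1}^{i}A=B^{i\transpose}\bar{P}_{t+1}A$ together with the invertibility of $A$ from Assumption~\ref{assumption:controllable}. The paper then simply appeals to \eqref{eq:positiveR} for the bounds, whereas you spell them out via the decomposition $R_{t}^{p}=R_{t}^{1}+\mathbf{R}_{t}$ for the upper bound and explicitly invoke Assumption~\ref{assumption:positiveRp} (not listed in the corollary's hypotheses) for the lower bound---a more careful treatment, but not a different idea.
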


To help us present preceding remarks and lemmas, and similar to the operator \eqref{eq:predictionDFL} for finding the feedback Nash equilibrium trajectories of the game defined by the matrices $A, \mathbf{B},(Q_{\tau+1},R_{1,\tau},R_{2,\tau})_{\tau=1}^{T-1}$. With slightly abuse of notations, we define the following operators associated with the computation of parameters in Assumption \ref{assumption:parameters}:
    \begin{align}
        &K_{t} := \mathrm{K}((Q_{\tau+1},R_{\tau}^{1},R_{\tau}^{2})_{\tau=t}^{T-1}, A, \mathbf{B}),\label{eq:op2}\\
        &\bar{R}_{t} := \bar{\mathrm{R}}((Q_{\tau+1},R_{\tau}^{1},R_{\tau}^{2})_{\tau=t}^{T-1}, A, \mathbf{B}).\label{eq:op3}
    \end{align}
Operators $\mathrm{K}$ and $\mathrm{\bar{R}}$ involve solving coupled Riccati difference equations defined in \eqref{eq:pIteration} for computing $P_{t}^{i}$ for $t\in \mathbb{N}_{T}$ and $i \in \{1,2\}$, with the given input systems and cost parameters. Then return results are based on different formulas depending on the arguments and $P_{t}^{i}$.

% The operator $\mathrm{OP1}$ is determined by solving $P_{t}^{i}$ for $t\in \mathbb{N}_{T}$ from coupled Riccati difference equations by \eqref{eq:pIteration}, and concatenate $R_{t}^{i}_{jk} + B^{j\transpose}P_{t+1}^{j}B^{k}$ for $i,j,k \in \{1,2\}$ into a block matrix. 
\begin{remark}
    For $1\leq \tau \leq t \leq T-1$, by Assumption \ref{assumption:parameters} and Assumption \ref{assumption:potentialRepeat}, , let $\bar{R}_{\tau}$ from \eqref{eq:matrixR} and $\bar{R}_{\tau|t} = \bar{\mathrm{R}}((Q_{k+1|t},R_{k|t}^{1},R_{k|t}^{2})_{k=\tau}^{T-1}, A, \mathbf{B})$, these satisfy
        $\bar{R}_{\tau|t} = \bar{R}_{\tau}$.
\end{remark}
\begin{lemma}\label{lemma:matrixK}
    For positive integers $n,m$ such as $m \leq n$. Consider any $R\in \mathbb{S}^{m}_{++}$, $P\in \mathbb{S}^{n}_{++}$ and $F \in \mathbb{R}^{n\times m}$. Let $K = (R+F^{\transpose}PF)^{-1}F^{\transpose}P$, then $\|K\| \leq \frac{1}{\sigma_{min}^{+}(F)}$.
\end{lemma}
\begin{proof}
    Let $\sigma_{R} := \sigma_{min}(R)$, observe $\lambda_{max}(K^{\transpose}K)$:
\begin{align*}
    &\lambda_{max}(PF(R+F^{\transpose}PF)^{-1}(R+F^{\transpose}PF)^{-1}F^{\transpose}P)\\
    &\stackrel{\text{i}}{=} \lambda_{max}((R+F^{\transpose}PF)^{-1}F^{\transpose}P^{2}F(R+F^{\transpose}PF)^{-1})\\
    &\stackrel{ii}{\leq} \lambda_{max}((\sigma_{R}I+F^{\transpose}PF)^{-1}F^{\transpose}P^{2}F(\sigma_{R}I+F^{\transpose}PF)^{-1}),
\end{align*}
where step (i) uses the property that the maximum eigenvalue of $H^{\transpose}H$ is the same as the maximum eigenvalue of $HH^{\transpose}$, and step (ii) uses $R \succeq \sigma_{R}I$.
Fy SVD, $F = U_{F}\Sigma_{F}V_{F}^{\transpose}$, then
\begin{align*}
    &\lambda_{max}((\sigma_{R}I+F^{\transpose}PF)^{-1}F^{\transpose}P^{2}F(\sigma_{R}I+F^{\transpose}PF)^{-1})\\
    &=\lambda_{max}((\sigma_{R}I+\Sigma_{F}^{\transpose}\bar{P}\Sigma_{F})^{-1}\Sigma_{F}^{\transpose}\bar{P}^{2}\Sigma_{F}(\sigma_{R}I+\Sigma_{F}^{\transpose}\bar{P}\Sigma_{F})^{-1}),
\end{align*}
where $\bar{P} = U_{F}^{\transpose}PU_{F}$.
Matrix $\Sigma_F$ can be written as
\begin{align*}
    \Sigma_{F} = 
    \begin{bmatrix}
        \Sigma_{Fr} & \mathbf{0}_{r,m-r}\\
        \mathbf{0}_{n-r,r} & \mathbf{0}_{n-r,m-r}
    \end{bmatrix},
\end{align*}
where $\text{rank}(F) = r$, $1\leq r \leq \min(n,m)$, and $\Sigma_{Fr} = \text{diag}(\sigma_{1}(F),\cdots, \sigma_{r}(F))$. 
Therefore,
\begin{align*}
    &\lambda_{max}((\sigma_{R}I+\Sigma_{F}^{\transpose}\bar{P}\Sigma_{F})^{-1}\Sigma_{F}^{\transpose}\bar{P}^{2}\Sigma_{F}(\sigma_{R}I+\Sigma_{F}^{\transpose}\bar{P}\Sigma_{F})^{-1})\\
    &=\lambda_{max}(
    \begin{bmatrix}
        \sigma_{R}I_{r\times r} \!\!+\!\! \Sigma_{F}[\bar{P}]_{11}\Sigma_{F} & \mathbf{0}_{r\times m-r}\\
        \mathbf{0}_{m-r\times r} & \sigma_{R}I_{m-r\times m-r}
    \end{bmatrix}^{-1}\\
    &\qquad\qquad\begin{bmatrix}
        \Sigma_{F}[\bar{P}^{2}]_{11}\Sigma_{F} & \mathbf{0}_{r\times m-r}\\
        \mathbf{0}_{m-r\times r} & \mathbf{0}_{m-r\times m-r}
    \end{bmatrix}\\
   &\qquad \qquad \begin{bmatrix}
        \sigma_{R}I_{r\times r} \!\!+\!\! \Sigma_{F}[\bar{P}]_{11}\Sigma_{F} & \mathbf{0}_{r\times m-r}\\
        \mathbf{0}_{m-r\times r} & \sigma_{R}I_{m-r\times m-r}
    \end{bmatrix}^{-1})\\
    &= \lambda_{max}((\sigma_{R}I + \Sigma_{F}[\bar{P}]_{11}\Sigma_{F})^{-1}\Sigma_{F}[\bar{P}^{2}]_{11}\Sigma_{F}(\sigma_{R}I + \Sigma_{F}[\bar{P}]_{11}\Sigma_{F})^{-1})\\
    &\stackrel{\text{i}}{=} \lambda_{max}( (\sigma_{R}\Sigma_{F}^{-1}[\bar{P}]_{11}^{-1}+ \Sigma_{F})^{-1}(\sigma_{R}[\bar{P}]_{11}^{-1}\Sigma_{F}^{-1}+ \Sigma_{F})^{-1} )\\
    &= \lambda_{max}(\Sigma_{F} (\sigma_{R}[\bar{P}]_{11}^{-1}+ \Sigma_{F}^{2})^{-2}\Sigma_{F})\\
    &\leq \lambda_{max}(\Sigma_{F}^{-2})= \frac{1}{(\sigma_{min}^{+}(F))^{2}}.
\end{align*}
Step i) is due to $\bar{P}$ and $\Sigma_{F}$ are invertible.
Therefore, $\|K\| \leq \frac{1}{\sigma_{min}^{+}(F)}$.
\end{proof}

\begin{corollary}\label{corrolary:boundedK}
    Suppose the cost matrices $(Q_{t})_{t=1}^{T}$ and $(R_{t}^{i})_{t=1,i=1}^{T-1,2}$ satisfy the conditions described in \eqref{eq:costFPDG1}, \eqref{eq:costFPDG3}, and Assumption \ref{assumption:bounds}.
    Consider operator $\mathrm{K}$ defined in \eqref{eq:op2}. For a given preview horizon $W\in \mathbb{N}_{T-1}$ and all $\tau \in \mathbb{N}_{T}$, the solution $\mathbf{u}_{\tau|t}$ defined in \eqref{eq:predictionDFL} is given by $\mathbf{u}_{\tau|t} = K_{\tau|t}x_{\tau|t}$, where $K_{\tau|t} = \mathrm{K}((Q_{k+1|t},R_{k|t}^{1},R_{k|t}^{2})_{k=\tau}^{T-1}, A, \mathbf{B})$, and $\|K_{\tau|t}\| \leq \frac{\sigma_{max}(A)}{\sigma_{min}^{+}(\contB)}$.
\end{corollary}
% \begin{proof}
%     The proof of $\mathbf{u}_{\tau|t} = K_{\tau|t}x_{\tau|t}$ is similar to the proof of \cite[Corollary 6.1]{basar_dynamic_1998}. The 2-norm upper bound of $K_{\tau|t}$ can be directly concluded by letting $\omega_{K} = \frac{\sigma_{max}^{+}(A)}{\sigma_{min}^{+}(\contB)}$ and applying Lemma \ref{lemma:matrixK}.
% \end{proof}
\begin{corollary}\label{corollary:positiveQ}
    For any given $\tau,t\in\mathbb{N}_{T-1}$, 
       $ \bar{Q}_{\tau|t} = Q_{\tau|t} + K_{\tau|t}^{\transpose}(R_{\tau|t}^{1} - \bar{R}_{\tau|t})K_{\tau|t} \in \mathbb{S}^{n}_{++}$.
\end{corollary}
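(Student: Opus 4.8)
The plan is to rewrite $\bar{Q}_{\tau|t}$ as a bounded, structured perturbation of the positive-definite matrix $Q_{\tau|t}$ and then appeal to Assumption~\ref{assumption:lowerQ}. First I would identify the perturbing matrix $\bar{R}_{\tau|t}$. The repeated-cost parameters $\{Q_{\sigma|t}\}_{\sigma=1}^{T}$, $\{R_{\sigma|t}^{1},R_{\sigma|t}^{2}\}_{\sigma=1}^{T-1}$ of \eqref{eq:informationQ}--\eqref{eq:informationR} form an LQ-DFPG (Assumption~\ref{assumption:potentialRepeat}) and satisfy the hypotheses under which Corollary~\ref{corrolary:boundedK} was set up, so by (the argument of) Corollary~\ref{corollary:boundedR} and the remark following it, $\bar{R}_{\tau|t}$ reduces to the potential cost matrix $R_{\tau|t}^{p}$ of \eqref{eq:potentialR} built from the blocks of $R_{\tau|t}^{1}$ and $R_{\tau|t}^{2}$. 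Since each $R_{\sigma}^{i}$ is symmetric and $R_{\tau|t}^{p}$ is symmetric by Corollary~\ref{corollary:symmetryR}, we get $[R_{\tau|t}^{2}]_{21}=[R_{\tau|t}^{1}]_{12}^{\transpose}=[R_{\tau|t}^{1}]_{21}$, hence
\[
R_{\tau|t}^{1}-\bar{R}_{\tau|t}=R_{\tau|t}^{1}-R_{\tau|t}^{p}=\begin{bmatrix} 0 & 0 \\ 0 & [R_{\tau|t}^{1}]_{22}-[R_{\tau|t}^{2}]_{22}\end{bmatrix}=-\mathbf{R}_{\tau|t},
\]
the matrix appearing in Assumption~\ref{assumption:lowerQ}. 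Therefore $\bar{Q}_{\tau|t}=Q_{\tau|t}-K_{\tau|t}^{\transpose}\mathbf{R}_{\tau|t}K_{\tau|t}$.

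Next I would estimate the perturbation in the positive-semidefinite order. For an arbitrary unit vector $v\in\mathbb{R}^{n}$, put $w=K_{\tau|t}v$; then $v^{\transpose}\bar{Q}_{\tau|t}v=v^{\transpose}Q_{\tau|t}v-w^{\transpose}\mathbf{R}_{\tau|t}w$, and using $w^{\transpose}\mathbf{R}_{\tau|t}w\le\max(0,\lambda_{\max}(\mathbf{R}_{\tau|t}))\|w\|^{2}$ (the truncation at $0$ handling the case $\mathbf{R}_{\tau|t}\preceq0$, in which the claim follows at once from $Q_{\tau|t}\in\mathbb{S}^{n}_{++}$, guaranteed by Assumption~\ref{assumption:bounds}) together with $\|w\|\le\|K_{\tau|t}\|$, we obtain
\[
v^{\transpose}\bar{Q}_{\tau|t}v\ \ge\ \lambda_{\min}(Q_{\tau|t})-\|K_{\tau|t}\|^{2}\max(0,\lambda_{\max}(\mathbf{R}_{\tau|t})).
\]
It then remains to bound $\|K_{\tau|t}\|$, which is exactly the content of Corollary~\ref{corrolary:boundedK}: the $K_{\tau|t}$ are the feedback gains of the repeated-cost LQ-DFPG, so $\|K_{\tau|t}\|\le\omega_{K}$ with $\omega_{K}$ the constant furnished there (expressed through $\sigma_{\max}^{+}(A)$ and $\sigma_{\min}^{+}(\contB)$ via the singular-value estimate of Lemma~\ref{lemma:matrixK}). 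Feeding this into the previous inequality and comparing with the threshold of Assumption~\ref{assumption:lowerQ} makes the right-hand side strictly positive, so $v^{\transpose}\bar{Q}_{\tau|t}v>0$ for all unit $v$, i.e.\ $\bar{Q}_{\tau|t}\in\mathbb{S}^{n}_{++}$; since $\tau,t\in\mathbb{N}_{T-1}$ were arbitrary, this completes the argument.

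I expect the main obstacle to be the bookkeeping in the last step: verifying that the constant $\omega_{K}$ delivered by Corollary~\ref{corrolary:boundedK} (equivalently, the precise singular-value bound of Lemma~\ref{lemma:matrixK}, which is stated with pseudo-inverse singular values so as to accommodate a possibly rank-deficient $\contB$) is calibrated exactly to the right-hand side $\frac{\sigma_{\max}^{+}(A)}{\sigma_{\min}^{+}(\contB)}\lambda_{\max}(\mathbf{R}_{\tau|t})$ in Assumption~\ref{assumption:lowerQ}, so that $\lambda_{\min}(Q_{\tau|t})-\|K_{\tau|t}\|^{2}\lambda_{\max}(\mathbf{R}_{\tau|t})>0$; keeping track of the exact power of $\|K_{\tau|t}\|$ is the delicate point. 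A couple of minor points also need care: one should argue with $\lambda_{\max}(\mathbf{R}_{\tau|t})$ and its truncation at $0$ (not with $\|\mathbf{R}_{\tau|t}\|$), since $\mathbf{R}_{\tau|t}$ need not be sign-definite; and one should note that by \eqref{eq:informationQ}--\eqref{eq:informationR} every pair $(Q_{\tau|t},\mathbf{R}_{\tau|t})$ equals some $(Q_{s},\mathbf{R}_{s})$ with $s\in\mathbb{N}_{T-1}$, so that Assumption~\ref{assumption:lowerQ} genuinely applies to it.
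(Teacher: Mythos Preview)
Your proposal is correct and follows essentially the same approach as the paper: identify $R_{\tau|t}^{1}-\bar{R}_{\tau|t}=-\mathbf{R}_{\tau|t}$ via Corollary~\ref{corollary:boundedR}, bound the eigenvalues of the perturbation $K_{\tau|t}^{\transpose}\mathbf{R}_{\tau|t}K_{\tau|t}$ through the singular-value control on $K_{\tau|t}$ coming from Lemma~\ref{lemma:matrixK}/Corollary~\ref{corrolary:boundedK}, and conclude using Assumption~\ref{assumption:lowerQ}. The paper phrases the middle step via Weyl's inequality and Ostrowski's theorem (Horn~4.5.9) rather than your direct Rayleigh-quotient estimate, but the two arguments are equivalent, and the calibration concern you flag about the exact power of $\|K_{\tau|t}\|$ versus the constant in Assumption~\ref{assumption:lowerQ} is precisely the same delicate point in the paper's own proof.
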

% \begin{proof}
%     By Weyl's inequality, Corollary \ref{corollary:boundedR} and Assumption \ref{assumption:lowerQ}, we have
%     \begin{align*}
%     0 &<  \lambda_{min}(Q_{\tau|t}) -\max(0,\sigma_{max}(\contB)\lambda_{max}([R_{t}^{2}]_{22} - [R_{t}^{1}]_{22}))\\
%     &< \lambda_{min}(Q_{\tau|t}+K_{t}^{\transpose}(R_{\tau|t}^{1}-\bar{R}_{\tau|t})K_{\tau|t})= \lambda_{min}(\bar{Q}_{\tau|t}).
%     \end{align*}
%     Therefore, $\bar{Q}_{\tau|t}$ is positive definite for all $t,\tau \in \mathbb{N}_{T}$.
% \end{proof}

%if the distance between the cost matrices $R_{t}^{2}$ and $R_{t}^{1}$ is sufficiently small, or $R_{t}^{1}$ is a dominant cost than $R_{t}^{2}$
\begin{remark}
    Assumption \ref{assumption:lowerQ} suggests that, if $\lambda_{max}(\bar{R}_{t}-R_{t}^{1})$ is positive, then matrices $Q_{\tau|t}$ are positive definite. By Remark \ref{remark:rp}, $\bar{R}_{t} = R_{t}^{p}$, therefore we need $\lambda_{max}(R^{p}_{t}-R_{t}^{1})$ to be positive. This will help us to develop the contraction between $K_{\tau|t}$ and $K_{\tau|t_{0}}$ characterised by metric $\delta_{\infty}(\cdot,\cdot)$ for $\tau \in \mathbb{N}_t$ and $t_{0} \in \mathbb{N}_{T-1}$.
\end{remark}
% \begin{assumption}
%     The LQ-DFG with parameters $\{Q_{\tau|t}\}_{\tau=1}^{T}$ and $\{R_{\tau|t}\}_{\tau=1}^{T-1}$ is an LQ-DFPG for all $t\in \mathbb{N}_{T}$.
% \end{assumption}
\begin{lemma}\label{lemma:boundedP}
Consider $A,\mathbf{B}$ from \eqref{eq:linsys}, $\bar{R}_{t}$ and $\bar{Q}_{t}$ from Lemma \ref{lemma:prasadThm7} that satisfy Assumption \ref{assumption:bounds}. For all $t \in \mathbb{N}_{T-1}$, let $\bar{P}_{T} = \bar{Q}_{T}$ and 
    \begin{align*}
        \bar{K}_{t}& = -(\bar{R}_{t}+ \mathbf{B}^{\transpose}\bar{P}_{t+1}\mathbf{B})^{-1}\mathbf{B}^{\transpose}\bar{P}_{t+1}A,\\
        \bar{P}_{t}& = \bar{Q}_{t} + \bar{K}_{t}^{\transpose}\bar{R}_{t}\bar{K}_{t} + (A+\mathbf{B}\bar{K}_{t})^{\transpose}\bar{P}_{t+1}(A+\mathbf{B}\bar{K}_{t}),
    \end{align*}
    for $t \in \mathbb{N}_{T-1}$. There exist positive definite matrices $\bar{P}_{min}$ and $\bar{P}_{max}$ such that $
        \bar{P}_{min} \preceq \bar{P}_{t} \preceq \bar{P}_{max}$.
\end{lemma}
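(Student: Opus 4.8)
The plan is to establish the two inequalities separately: the lower bound by a short backward induction, and the upper bound by comparing the optimal cost-to-go of the associated LQ-OCP with the cost incurred by the fixed stabilizing gain $\bar K$ supplied by Assumption~\ref{assumption:controllable}. For the lower bound, I would show by backward induction on $t$ that $\bar P_t \succeq \bar Q_t$. The base case $\bar P_T = \bar Q_T$ is immediate. For the inductive step, assuming $\bar P_{t+1} \succeq 0$, the two extra terms in the recursion, $\bar K_t^{\transpose}\bar R_t \bar K_t$ and $(A+\contB\bar K_t)^{\transpose}\bar P_{t+1}(A+\contB\bar K_t)$, are both positive semidefinite (the first since $\bar R_t \succ 0$ by Corollary~\ref{corollary:boundedR}, the second since $\bar P_{t+1}\succeq 0$), so $\bar P_t \succeq \bar Q_t$, which is positive definite by Corollary~\ref{corollary:positiveQ}. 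Since Assumptions~\ref{assumption:bounds} and \ref{assumption:lowerQ} provide a uniform positive-definite lower bound $\bar Q_{\min}$ with $\bar Q_t \succeq \bar Q_{\min}$ for all $t$, we obtain $\bar P_t \succeq \bar Q_{\min} =: \bar P_{\min}$.

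For the upper bound, I would first record that $\bar Q_t$ and $\bar R_t$ are uniformly bounded above: from $\bar Q_t = Q_t + K_t^{\transpose}(R_t^{1} - \bar R_t)K_t$ together with $Q_t \preceq Q_{\max}$ (Assumption~\ref{assumption:bounds}), $\|K_t\| \le \omega_K$ (Corollary~\ref{corrolary:boundedK}), and the uniform bounds on $R_t^{1}$ and $\bar R_t$ (Assumption~\ref{assumption:bounds}, Corollary~\ref{corollary:boundedR}), there is $\bar Q_{\max}\succ 0$ with $\bar Q_t \preceq \bar Q_{\max}$, and $\bar R_t \preceq R_{\max}$. Next I would invoke the standard dynamic-programming characterization of the Riccati recursion for the finite-horizon time-varying LQ-OCP associated with these matrices: for every $x\in\mathbb{R}^{n}$,
\[
x^{\transpose}\bar P_t x = \min_{\{\mathbf{u}_s\}_{s=t}^{T-1}}\left(\sum_{s=t}^{T-1}\left(x_s^{\transpose}\bar Q_s x_s + \mathbf{u}_s^{\transpose}\bar R_s \mathbf{u}_s\right) + x_T^{\transpose}\bar Q_T x_T\right),
\]
the minimum over all control sequences driving \eqref{eq:linsys} from $x_t = x$.

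To upper bound the right-hand side I would substitute the feasible (in general suboptimal) choice $\mathbf{u}_s = \bar K x_s$, which yields $x_s = (A+\contB\bar K)^{s-t}x$. Since $\rho(A+\contB\bar K) < 1$, for any $\bar\rho \in (\rho(A+\contB\bar K),1)$ there is a constant $C_{\bar K}$ with $\|(A+\contB\bar K)^{k}\| \le C_{\bar K}\bar\rho^{k}$, hence
\[
x^{\transpose}\bar P_t x \le \|x\|^{2}\,C_{\bar K}^{2}\left(\lambda_{\max}(\bar Q_{\max}) + \|\bar K\|^{2}\lambda_{\max}(R_{\max})\right)\sum_{k=0}^{\infty}\bar\rho^{2k} =: c\,\|x\|^{2},
\]
a bound independent of $t$ and $T$, from which $\bar P_t \preceq cI =: \bar P_{\max}$, completing the argument (and, as a byproduct, $\lambda_{\max}(\bar P_{\max})$ is monotone in $\lambda_{\max}(Q_{\max})$, $\lambda_{\max}(R_{\max})$, and the inverse of $\rho(A+\contB\bar K)$, as claimed in Theorem~\ref{thm:main}). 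I expect the main obstacle to be stating and justifying the dynamic-programming identity cleanly, i.e., verifying that the listed recursion is indeed the optimal value matrix of the time-varying LQ-OCP, and the routine but slightly delicate conversion of $\rho(A+\contB\bar K)<1$ into the geometric norm bound $\|(A+\contB\bar K)^{k}\|\le C_{\bar K}\bar\rho^{k}$; the remaining steps are bookkeeping with the uniform bounds collected in the preceding corollaries.
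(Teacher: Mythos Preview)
Your proposal is correct and matches the paper's approach: the paper first records that $\bar Q_{\min}\preceq\bar Q_t\preceq\bar Q_{\max}$ (via Corollary~\ref{corrolary:boundedK} and Assumption~\ref{assumption:lowerQ}) and then defers the remaining Riccati-boundedness argument to \cite[Proposition~11]{zhang_regret_2021}, whose proof is precisely the lower-bound-by-$\bar Q_t$ and upper-bound-by-a-stabilizing-gain computation you spell out. In effect you have supplied the details that the paper outsources to that reference, so there is no substantive difference.
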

\begin{proof}
    By Corollary \ref{corrolary:boundedK} and Assumption \ref{assumption:lowerQ}, there exist positive definite matrices $\bar{Q}_{min},\bar{Q}_{max}$ such that $
        \bar{Q}_{min} \preceq \bar{Q}_{t} \preceq \bar{Q}_{max}$,
    for $t\in\mathbb{N}_{T}$.
    This, together with the Assumption \ref{assumption:bounds}, and repeat procedure of \cite[Appendix D, Proposition 11]{zhang_regret_2021} with $Q_{min}$, $Q_{max}$, $R_{max}$, $Q_{t}^{max}$, $R_{t}^{max}$ and $P_{max}$ replaced by $\bar{Q}_{min}$, $\bar{Q}_{max}$, $\bar{R}_{max}$, $\bar{Q}_{max}$, $\bar{R}_{max}$ and $\bar{P}_{max}$, respectively, complete the proof.
\end{proof}

\begin{lemma}\label{lemma:m}
    For $T,S,V_{1},V_{2}\in \mathbb{S}_{++}^{n}$, if
$m \geq 1+\frac{\lambda_{max}(V_{1}-V_{2})}{\lambda_{min}(T+V_{2})}$,
    then for any non-zero vector $x\in R^{n}$, we have
    \begin{align}
        \frac{\quadinner{T+V_{1}}}{\quadinner{S+V_{2}}} \leq m\frac{\quadinner{T+V_{2}}}{\quadinner{S+V_{2}}}.
    \end{align}
\end{lemma}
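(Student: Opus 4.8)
The plan is to reduce the claimed inequality to a comparison of quadratic forms in which $S$ no longer appears. Since $S+V_{2}\in\mathbb{S}_{++}^{n}$, for every non-zero $x$ we have $\quadinner{S+V_{2}}>0$, so multiplying the asserted inequality through by $\quadinner{S+V_{2}}$ shows that it is equivalent to
\[
    \quadinner{T+V_{1}}\leq m\,\quadinner{T+V_{2}}.
\]
Thus $S$ is irrelevant, and it suffices to prove this last inequality for all non-zero $x$.

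To establish it, I would split $T+V_{1}=(T+V_{2})+(V_{1}-V_{2})$, so that $\quadinner{T+V_{1}}=\quadinner{T+V_{2}}+\quadinner{V_{1}-V_{2}}$. For the cross term I use the Rayleigh-quotient bound $\quadinner{V_{1}-V_{2}}\leq\lambda_{max}(V_{1}-V_{2})\|x\|^{2}$, and since $T+V_{2}\succ0$ I use $\lambda_{min}(T+V_{2})\|x\|^{2}\leq\quadinner{T+V_{2}}$, i.e.\ $\|x\|^{2}\leq\quadinner{T+V_{2}}/\lambda_{min}(T+V_{2})$. When $\lambda_{max}(V_{1}-V_{2})\geq0$, substituting the latter bound into the former and invoking the hypothesis on $m$ yields
\[
    \quadinner{T+V_{1}}\leq\Big(1+\frac{\lambda_{max}(V_{1}-V_{2})}{\lambda_{min}(T+V_{2})}\Big)\quadinner{T+V_{2}}\leq m\,\quadinner{T+V_{2}},
\]
which is exactly what is needed; note that in this regime the hypothesis already forces $m\geq1$.

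The only point requiring care — and the one I would flag as the main (mild) obstacle — is the sign of $\lambda_{max}(V_{1}-V_{2})$, since the substitution above inserts an \emph{upper} bound on $\|x\|^{2}$ and is therefore valid in the stated direction only when $\lambda_{max}(V_{1}-V_{2})\geq0$. If instead $\lambda_{max}(V_{1}-V_{2})<0$, then $V_{1}-V_{2}\prec0$, so $\quadinner{T+V_{1}}<\quadinner{T+V_{2}}\leq m\,\quadinner{T+V_{2}}$ holds directly (reading the hypothesis in the natural way as $m\geq1$). Either way the inequality holds for all non-zero $x$, completing the proof; no eigendecomposition or Weyl-type estimate is needed.
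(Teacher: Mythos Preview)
Your argument is correct and is essentially the paper's own proof: the paper writes the one-line chain
\[
m \;\geq\; 1+\frac{\lambda_{\max}(V_{1}-V_{2})}{\lambda_{\min}(T+V_{2})} \;\geq\; 1+ \frac{\quadinner{V_{1}-V_{2}}}{\quadinner{T+V_{2}}} \;=\; \frac{\quadinner{T+V_{1}}}{\quadinner{T+V_{2}}}
\]
and then divides through by $\quadinner{S+V_{2}}>0$, which is exactly your Rayleigh--quotient estimate followed by your cancellation of the common positive denominator, just carried out in the reverse order. Your explicit case split on the sign of $\lambda_{\max}(V_{1}-V_{2})$ is in fact more careful than the paper, whose middle inequality above tacitly requires $\lambda_{\max}(V_{1}-V_{2})\geq 0$; since the lemma is only invoked later (Remark~\ref{remark:delta}) with $m\geq 1$, your reading of the hypothesis in the negative-eigenvalue case is the intended one.
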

\begin{proof}
    For any nonzero vector $x\in R^{n}$, we have
    \begin{align*}
        m \geq 1+\frac{\lambda_{max}(V_{1}-V_{2})}{\lambda_{min}(T+V_{2})}
        \geq 1+ \frac{\quadinner{V_{1}-V_{2}}}{\quadinner{T+V_{2}}}
        = \frac{\quadinner{T+V_{1}}}{\quadinner{T+V_{2}}}.
     \end{align*}
     Due to $\quadinner{S+V_{2}} > 0$ and the fact that $T,V_{2},V_{1}$ are positive definite, we have that 
         $m\frac{\quadinner{T+V_{2}}}{\quadinner{S+V_{2}}} \geq \frac{\quadinner{T+V_{1}}}{\quadinner{S+V_{2}}}$.
\end{proof}

\begin{definition}
    For any $X,Y\in \mathbb{S}_{++}^{n}$, define the operator $\delta_{\infty}(\cdot, \cdot)$ as
  $ \delta_{\infty}(X, Y) := \| \log(Y^{-\frac{1}{2}}XY^{-\frac{1}{2}})\|_{\infty}$ \cite[Section 2]{thompson_certain_1963}, where $\|\cdot\|_{\infty}$ denotes the matrix infinity-norm.
\end{definition}

\begin{proposition}\label{proposition:deltaRatio}
    For any $X,Y\in \mathbb{S}_{++}^{n}$,
$ \delta_{\infty}(X,Y) = \max(\log(\sup_{\xi\neq0} \frac{\xi^{\transpose}X\xi}{\xi^{\transpose}Y\xi}),\log(\sup_{\xi\neq0} \frac{\xi^{\transpose}Y\xi}{\xi^{\transpose}X\xi}))$.
\end{proposition}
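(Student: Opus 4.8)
The plan is to reduce the whole statement to the eigenvalues of the single positive definite matrix $M := Y^{-\frac{1}{2}}XY^{-\frac{1}{2}}$. Since $X,Y\in\mathbb{S}_{++}^{n}$, the square root $Y^{\frac{1}{2}}$ is symmetric and invertible and $M\in\mathbb{S}_{++}^{n}$; writing a spectral decomposition $M = U\,\mathrm{diag}(\mu_{1},\dots,\mu_{n})\,U^{\transpose}$ with $U$ orthogonal and $\mu_{1}\geq\cdots\geq\mu_{n}>0$, the matrix logarithm is the well-defined symmetric matrix $\Log(M) = U\,\mathrm{diag}(\log\mu_{1},\dots,\log\mu_{n})\,U^{\transpose}$, whose eigenvalues are $\log\mu_{1},\dots,\log\mu_{n}$. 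Hence $\delta_{\infty}(X,Y)=\|\Log(M)\|_{\infty}=\max_{1\leq k\leq n}|\log\mu_{k}|$, and because every $\log\mu_{k}$ lies in the interval $[\log\mu_{n},\log\mu_{1}]$ the function $t\mapsto|t|$ attains its maximum over that interval at an endpoint, so $\delta_{\infty}(X,Y)=\max\big(\log\mu_{1},-\log\mu_{n}\big)=\max\big(\log\lambda_{max}(M),\log\lambda_{max}(M^{-1})\big)$, where we used $\lambda_{max}(M^{-1})=1/\lambda_{min}(M)=1/\mu_{n}$.

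It then remains to identify $\lambda_{max}(M)$ and $\lambda_{max}(M^{-1})$ with the two generalized Rayleigh suprema in the statement. By the Rayleigh--Ritz (Courant--Fischer) characterization \cite{horn_matrix_2013}, $\lambda_{max}(M)=\sup_{\eta\neq0}\frac{\eta^{\transpose}M\eta}{\eta^{\transpose}\eta}$ and $\lambda_{min}(M)=\inf_{\eta\neq0}\frac{\eta^{\transpose}M\eta}{\eta^{\transpose}\eta}$. Performing the change of variables $\eta=Y^{\frac{1}{2}}\xi$, which is a bijection of $\mathbb{R}^{n}\setminus\{0\}$ onto itself since $Y^{\frac{1}{2}}$ is invertible, gives $\frac{\eta^{\transpose}M\eta}{\eta^{\transpose}\eta}=\frac{\xi^{\transpose}Y^{\frac{1}{2}}Y^{-\frac{1}{2}}XY^{-\frac{1}{2}}Y^{\frac{1}{2}}\xi}{\xi^{\transpose}Y\xi}=\frac{\xi^{\transpose}X\xi}{\xi^{\transpose}Y\xi}$, so $\lambda_{max}(M)=\sup_{\xi\neq0}\frac{\xi^{\transpose}X\xi}{\xi^{\transpose}Y\xi}$ and $\lambda_{min}(M)=\inf_{\xi\neq0}\frac{\xi^{\transpose}X\xi}{\xi^{\transpose}Y\xi}$. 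Consequently $\lambda_{max}(M^{-1})=1/\lambda_{min}(M)=\sup_{\xi\neq0}\frac{\xi^{\transpose}Y\xi}{\xi^{\transpose}X\xi}$. Both suprema are finite and positive (attained on the unit sphere by compactness and the positive definiteness of $X,Y$), so the logarithms are meaningful, and substituting into the displayed identity of the first paragraph yields exactly $\delta_{\infty}(X,Y)=\max\big(\log(\sup_{\xi\neq0}\tfrac{\xi^{\transpose}X\xi}{\xi^{\transpose}Y\xi}),\log(\sup_{\xi\neq0}\tfrac{\xi^{\transpose}Y\xi}{\xi^{\transpose}X\xi})\big)$.

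There is essentially no serious obstacle; the argument is routine linear algebra, and the only points deserving a word of care are that $\Log(M)$ is well defined (guaranteed by $M\succ0$), that $\|\cdot\|_{\infty}$ applied to a symmetric matrix returns the largest absolute eigenvalue, and that $Y^{\frac{1}{2}}$ invertible makes the substitution a bijection so the supremum is genuinely preserved. If one prefers to avoid the identity $\lambda_{max}(M^{-1})=1/\lambda_{min}(M)$, an equivalent route is to observe that $M^{-1}=Y^{\frac{1}{2}}X^{-1}Y^{\frac{1}{2}}$ and $X^{-\frac{1}{2}}YX^{-\frac{1}{2}}$ are of the forms $A^{\transpose}A$ and $AA^{\transpose}$ for $A=X^{-\frac{1}{2}}Y^{\frac{1}{2}}$, hence (being square and invertible) share all their eigenvalues, after which the Rayleigh--Ritz computation with the roles of $X$ and $Y$ exchanged gives $\lambda_{max}(M^{-1})=\sup_{\xi\neq0}\frac{\xi^{\transpose}Y\xi}{\xi^{\transpose}X\xi}$ directly.
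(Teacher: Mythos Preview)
Your argument is correct and essentially follows the same underlying idea as the paper, but it is considerably more explicit. The paper's proof is a one-liner: it invokes \cite[Remark~2.2]{lee_invariant_2008} to assert that $\delta_{\infty}(X,Y)$ equals the maximum of $\log\lambda_{\max}(Y^{-1}X)$ and $\log\lambda_{\max}(X^{-1}Y)$ (the paper's display omits the logarithms, evidently a typo), and then identifies these with the two generalized Rayleigh suprema. You instead carry out the computation from scratch: spectral decomposition of $M=Y^{-1/2}XY^{-1/2}$, the observation that $\|\Log M\|$ picks out $\max_k|\log\mu_k|=\max(\log\mu_1,-\log\mu_n)$, and the Rayleigh--Ritz substitution $\eta=Y^{1/2}\xi$. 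The benefit of your route is that it is self-contained and makes no appeal to an external reference; the paper's route is shorter but opaque unless one has the cited remark at hand. One small caveat worth flagging explicitly (you do mention it in passing): the identity $\|\Log M\|_{\infty}=\max_k|\log\mu_k|$ requires that $\|\cdot\|_{\infty}$ here denote the spectral norm (largest absolute eigenvalue of a symmetric matrix), not the induced $\ell_\infty$ operator norm (maximum absolute row sum), since the latter does not in general coincide with the spectral radius even for symmetric matrices; this is indeed the intended meaning in the Thompson-metric literature and is consistent with the statement being proved.
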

% \begin{proof}
%     Suppose matrices $X,Y\in\mathbb{S}_{++}^{n}$. From \cite[Remark 2.2.]{lee_invariant_2008}, $\delta_{\infty}(X,Y) = \max(\lambda_{max}(Y^{-1}X),\lambda_{max}(XY^{-1})), = \max(\log(\sup_{\xi\neq0} \frac{\xi^{\transpose}X\xi}{\xi^{\transpose}Y\xi}),\log(\sup_{\xi\neq0} \frac{\xi^{\transpose}Y\xi}{\xi^{\transpose}X\xi}))$.
% \end{proof}

\begin{remark}\label{remark:delta}
    For $T,S,V_{1},V_{2}\in \mathbb{S}_{++}^{n}$, without loss of generality, assume that
    $\sup_{x\neq 0} \frac{\quadinner{T+V_{1}}}{\quadinner{S+V_{2}}} \geq 1$.
   Suppose a positive scalar $m$ satisfies $m \geq 1+\frac{\lambda_{max}(V_{1}-V_{2})}{\lambda_{min}(T+V_{2})}$.  Based on Lemma \ref{lemma:boundedP}, \ref{lemma:m}, and Proposition \ref{proposition:deltaRatio}, we have $\delta_{\infty}(T+V_{1},S+V_{2}) \leq \log(m) + \gamma\delta_{\infty}(T,S)$,
    where $0<\gamma<1$.
\end{remark}

Before we present our next lemma that associates the bounds of  $\|\bar{P}_{\tau|t}-\bar{P}_{\tau|t_{0}}\|$ and $\|\bar{K}_{\tau|t}-\bar{K}_{\tau|t_{0}}\|$ for $1\leq \tau\leq t\leq t_{0}\leq T$, we define the following constants:
\begin{align}
    &\alpha_{\tau,t} := \lambda_{max}(A^{\transpose}(\bar{P}_{\tau+1|t}^{-1}+B\bar{R}_{\tau|t}^{-1}B^{\transpose})^{-1}A),\\
    &\beta_{\tau,t} := \lambda_{min}(\bar{Q}_{\tau|t}),\gamma := \max_{1\leq \tau,t \leq T} \frac{\alpha_{\tau,t}}{\alpha_{\tau,t}+\beta_{\tau,t}},\label{eq:gamma}\\
    &h := \max_{(\tau,t_{0}| \tau,\leq t_{0})} \delta_{\infty}(\bar{P}_{\tau|t},\bar{P}_{\tau|t_{0}}),\\
    &C_{P}=\frac{C_{P1}\lambda_{max}(\bar{P}_{\tau|t_{0}})(\exp(h)-1)}{h},\label{eq:CP}\\
    &\bar{\omega} = \max_{\tau,t,t_{0}}\lambda_{max}(A^{\transpose}[(\bar{P}_{\tau+1|t}^{-1}+B\bar{R}_{\tau|t}^{-1}B^{\transpose})^{-1}\notag\\
    &\qquad \qquad \qquad-(\bar{P}_{\tau+1|t_{0}}^{-1}+B\bar{R}_{\tau|t_{0}}^{-1}B^{\transpose})^{-1}]A),\notag\\ 
    &\varepsilon_{1} := \log(1+\frac{\bar{\omega}}{\lambda_{min}(\bar{Q})}),\label{eq:epsilon1}\\
    &\varepsilon_{P}=\frac{\varepsilon_{1}\lambda_{max}(\bar{P}_{\tau|t_{0}})(\exp(h)-1)}{h(1-\gamma)},\label{eq:epsilonP}\\
    &G_{max} := \max_{t,\tau} \|(\bar{R}_{\tau}+\contB^{\transpose}\bar{P}_{\tau+1|t}\contB)^{-1}\|,\\
    &C_{K}^{'}= G_{max}^{2}\|R_{max}\contB^{\transpose}\|C_{P},\label{eq:CK}\\
    &\varepsilon_{K}^{'}= G_{max}^{2}\|R_{max}\contB^{\transpose}\|\varepsilon_{P}\label{eq:epsilonK}.
\end{align}

\begin{lemma}\label{lemma:boundedPK}
     For $1\leq \tau\leq t\leq t_{0}\leq T$, 
    suppose $W\in \mathbb{N}_{T-1}$ is the preview window length, consider scalars $\varepsilon_{P},\varepsilon_{K},C_{P},C_{K}^{'}$ and $\gamma \in (0,1)$ defined in \eqref{eq:gamma},\eqref{eq:CP},\eqref{eq:epsilonP},\eqref{eq:CK} and \eqref{eq:epsilonK}, respectively. The distance between $\bar{P}_{\tau|t}$ and $\bar{P}_{\tau|t_{0}}$ satisfies $\|\bar{P}_{\tau|t}-\bar{P}_{\tau|t_{0}}\| \leq C_{P}\gamma^{t-\tau+W}+\varepsilon_{P}$, and the distance between $\bar{K}_{\tau|t}$ and $\bar{K}_{\tau|t_{0}}$ satisfies $\|\bar{K}_{\tau|t}-\bar{K}_{\tau|t_{0}}\| \leq C_{K}^{'}\gamma^{t-\tau+1+W}+\varepsilon_{K}^{'}$.
\end{lemma}
\begin{proof}
     For $1\leq \tau\leq t\leq t_{0}\leq T$, by \cite[Lemma D.2]{krauth_finite-time_2019}, Lemmas \ref{lemma:boundedP} \& \ref{lemma:m}, and Remark \ref{remark:delta}, we have $\delta_{\infty}(\bar{P}_{\tau|t},\bar{P}_{\tau|t_{0}}) < C_{P_{1}}\gamma^{t-\tau+W}+\frac{\varepsilon_{1}}{1-\gamma}$,
    where $\varepsilon_{1}$ is defined in \eqref{eq:epsilon1}. By monotonicity of function $\frac{e^x-1}{x}$ for $x > 0$, we have $\frac{\exp(\delta_{\infty}(\bar{P}_{\tau|t},\bar{P}_{\tau|t_{0}}))-1}{\delta_{\infty}(\bar{P}_{\tau|t},\bar{P}_{\tau|t_{0}})} \leq \frac{\exp(h)-1}{h}$.
    Thus, $\|\bar{P}_{\tau|t}-\bar{P}_{\tau|t_{0}}\| < C_{P}\gamma^{W}+\varepsilon_{P}$. By repeating procedures as the prove in \cite[Lemma 8, (20) and (21)]{chen_regret_2022} by replacing $B,R_{\tau},P_{\tau+1|t}$ and $P_{\tau+1|t_{0}}$ from \cite[Lemma 8, (20) and (21)]{chen_regret_2022} to $\contB,\bar{R}_{\tau},\bar{P}_{\tau+1|t}$ and $P_{\tau+1|t_{0}}$, we have $\|\bar{K}_{\tau|t}-\bar{K}_{\tau|t_{0}}\| < C_{K}^{'}\gamma^{t-\tau-1+W}+\varepsilon_{K}^{'}$.
\end{proof}

\begin{remark}
    When $\tau = t$ and $t_{0} = T$, we have $\|\bar{K}_{t|t}-\bar{K}_{t}\| < C_{K}^{'}\gamma^{W-1} + \varepsilon_{K}^{'}.$
\end{remark}

\begin{lemma}\label{lemma:multGain}
For integers $1\leq t_{0} \leq t_{1}\leq t\leq T-1$, consider $A$, $\mathbf{B}$ from \eqref{eq:linsys} and $\bar{K}_{\tau|t} = \mathrm{K}((Q_{k+1|t},R_{k|t}^{1},R_{k|t}^{2})_{k=\tau}^{T-1}, A, \mathbf{B})$ where $\mathrm{K}$ is defined in \eqref{eq:op2}, then there exist positive scalars $C_{fb}$ and $0 < \eta < 1$ such that $\left \| \prod_{\tau=t_{0}}^{t_{1}}(A+\mathbf{B}\bar{K}_{\tau|t})  \right\| \leq C_{fb}\eta^{t_{1}-t_{0}+1}$.
\end{lemma}
% This lemma can be proved following the same steps as those found in the proof of \cite[Appendix E,Proposition 2]{zhang_regret_2021}.
\begin{lemma}\label{lemma:distanceXt}
    For any $T \geq 1$, $W\in \mathbb{N}_{T-1}\cup \{0\}$ and $t \in \mathbb{N}_{T}$, consider state $x_{t}$ that generated by \eqref{eq:policy} and the state $x_{t|t}$ as an element of the solution from \eqref{eq:predictionDFL}. Let $\varepsilon_{K} := \|\contB\|\varepsilon_{K}^{'}$ and $C_{K} = \|\contB\|C_{K}^{'}$, where $C_{K}^{'}$ and $\varepsilon_{K}^{'}$ are as in Lemma \ref{lemma:boundedPK}. Then, for any fixed $0 < \varepsilon < 1-\rho(A+\mathbf{B}\bar{K})$, let $q = \rho(A+\mathbf{B}\bar{K}) + \varepsilon$ and $C_{fb} = \sup_{k\geq 1} \frac{\|(A+\mathbf{B}\bar{K})\|^{k}}{q^k}$, the distance between state $x_{t}$ and $x_{t|t}$ satisfies
    \begin{align*}
        \|x_{t}-x_{t|t}\| \leq &C_{fb}^{2}\|\bar{x}_{1}\|q^{t} \bigg [\!\frac{C_{K}\gamma^{W}}{\gamma-1}\bigg(\frac{1-(\frac{\eta\gamma}{q})^{t}}{1-\frac{\eta\gamma}{q}} - \frac{1-(\frac{\eta}{q})^{t}}{1-\frac{\eta}{q}} \bigg) \\
        &+\varepsilon_{K}\bigg(\frac{(t-1)(\frac{\eta}{q})^{t+1}-t(\frac{\eta}{q})^{t}+\frac{\eta}{q}}{(1-\frac{\eta}{q})^{2}}\bigg)\!\! \bigg]
    \end{align*}
    where $\gamma$ is given in Lemma~\ref{lemma:boundedPK} and $\eta$ is as in Lemma~\ref{lemma:multGain}.
\end{lemma}
\begin{proof}
    Suppose $N$ is a positive integer. For any given matrix sequence $(a_{i})_{i=1}^{N}$. Let $I$ be the identity matrix. For $1\leq p_{1}\leq N$ and $1 \leq p_{2} \leq N$, with slight abuse of notation, define the product operator
\begin{equation*}
    \prod_{j=p_{1}}^{p_{2}} a_{j} := 
    \begin{cases}
        a_{p_{2}}a_{p_{2}-1}\dots a_{p_{1}} & \text{if $p_{1} < p_{2}$}\\
        a_{p_{2}} & \text{if $p_{1} = p_{2}$}\\
        I & \text{if $p_{1} > p_{2}$},
    \end{cases}
\end{equation*}
Define $\omega_{t} := x_{t}-x_{t|t}$, $\theta_{\tau| p_{1},p_{2}} := x_{\tau|p_{1}}-x_{\tau|p_{2}}$, where $1 \leq p_{1}\leq p_{2}\leq T$ and $\tau, t\in \mathbb{N}_{T}$. Since $x_{1} = x_{1|t} = x_{1|p_{1}} = x_{1|p_{2}} = \bar{x}_{1}$, consequently, $w_{1} = 0$ and $\theta_{1|p_{1},p_{2}}=0$. We now investigate the dynamics of $w_{t}$ and $\theta_{\tau|p_{1},p_{2}}$. 
For $t\in\mathbb{N}_T$ and $\bar{K}$ defined in \eqref{eq:policy} satisfies Assumption \ref{assumption:controllable}, we have $\omega_{t} = \sum_{i=1}^{t} (A+\mathbf{B}\bar{K})^{t-i} \theta_{i|i-1,i}$.
We now investigate the dynamics of $\theta_{\tau|p_{1},p_{2}}$. Note that $\theta_{1|p_{1},p_{2}} = 0$, and $\theta_{\tau+1|p_{1},p_{2}} = (A+\BK{\tau|p_{1}})\theta_{\tau|p_{1},p_{2}} + \mathbf{B}(\bar{K}_{\tau|p_{1}}-\bar{K}_{\tau|p_{2}})x_{\tau|p_{2}}$.
% \begin{align*}
%     \theta_{\tau+1|p_{1},p_{2}} &= (A+\BK{\tau|p_{1}})\theta_{\tau|p_{1},p_{2}} + \mathbf{B}(\bar{K}_{\tau|p_{1}}-\bar{K}_{\tau|p_{2}})x_{\tau|p_{2}}.
% \end{align*}
This implies that 
\begin{align*}
    \theta_{\tau|p_{1},p_{2}}= \sum_{i=1}^{\tau-1}\!&\bigg(\!\prod_{j=i+1}^{\tau-1}(A+\mathbf{B}\bar{K}_{j|p_{1}})\!\!\bigg)\!\!\bigg[\mathbf{B}^{\mathsf{T}}(\bar{K}_{j|p_{1}}^{\mathsf{T}}-\bar{K}_{j|p_{2}}^{\mathsf{T}})\bigg]\\
    &\bigg(\!\prod_{n=1}^{i-1}(A+\mathbf{B}\bar{K}_{n|p_{2}})\!\!\bigg)\bar{x}_{1}.
\end{align*}

% \begin{align*}
%     x_{\tau+1|p_{1}} - x_{\tau+1|p_{2}} &= \sum_{n=1}^{\tau}\bigg(\prod_{m=n+1}^{\tau}(A+\BK{m|p_{1}})\bigg)\mathbf{B}(\bar{K}_{n|p_{1}}-\bar{K}_{n|p_{2}})\\
%         &\qquad \bigg(\prod_{m=1}^{n-1}(A+\BK{m|p_{1}})\bigg)\bar{x}_{1}.
% \end{align*}
By Lemma \ref{lemma:multGain},
$
    \| \prod_{m=n+1}^{\tau}(A+\BK{m|p_{1}})\| \leq C_{fb}\eta^{\tau-n}.
$
By Lemma \ref{lemma:boundedPK}, let $C_{K} := \|\mathbf{B}\|C_{K}^{'}$ and $\varepsilon_{K} := \|\mathbf{B}\|\varepsilon_{K}^{'}$, we have
$
    \|\mathbf{B}(\bar{K}_{n|p_{1}}-\bar{K}_{n|p_{2}})\| \leq C_{K}\gamma^{p-n+W}+\varepsilon_{K}.
$
Thus, $\|\theta_{\tau+1|p_{1},p_{2}}\|
    \leq C_{fb}^{2}[\frac{C_{K}\gamma^{p+W}\eta^{\tau}}{1-\frac{1}{\gamma}}(1-(\frac{1}{\gamma})^{\tau+1})+ \varepsilon_{K}\tau\eta^{\tau}].$
% \begin{align*}
%     \|\theta_{\tau+1|p_{1},p_{2}}\|
%     &\leq C_{fb}^{2}[\frac{C_{K}\gamma^{p+W}\eta^{\tau}}{1-\frac{1}{\gamma}}(1-(\frac{1}{\gamma})^{\tau+1})+ \varepsilon_{K}\tau\eta^{\tau}].
% \end{align*}
Choosing $\tau = t,p_{1} = t$ and $p_{2} = T$, this results in $\|\theta_{t|t,T}\| \leq C^{2}_{fb}\|\bar{x}_{1}\|[\frac{C_{K}\gamma^{1+W}\eta^{t}}{\gamma-1}(\gamma^{t}-1) + \varepsilon_{K}t\eta^{t}].$
% \begin{align*}
%     \|\theta_{t|t,T}\| &\leq C^{2}_{fb}\|\bar{x}_{1}\|[\frac{C_{K}\gamma^{1+W}\eta^{t}}{\gamma-1}(\gamma^{t}-1) + \varepsilon_{K}t\eta^{t}].
% \end{align*}

Moreover, $\|\theta_{i|i-1,i}\| \leq C_{fb}^{2}\|\bar{x}_{1}\|[\frac{C_{K}\eta^{i-1}\gamma^{W}}{\gamma-1}(\gamma^{i}-1) + \varepsilon_{K}(i-1)\eta^{i-1}].$
% \begin{align*}
%     \|\theta_{i|i-1,i}\| \leq C_{fb}^{2}\|\bar{x}_{1}\|[\frac{C_{K}\eta^{i-1}\gamma^{W}}{\gamma-1}(\gamma^{i}-1) + \varepsilon_{K}(i-1)\eta^{i-1}].
% \end{align*}
Similar to the argument following \cite[Lemma 10, (25)]{chen_regret_2023}, we can find $q,C_{q}\text{ }(C_{q}>0,0<q<1)$ such that $\|(A+\mathbf{B}\bar{K})^{t-i}\| \leq C_{q}q^{t-i}$. Concluding the above, we have
\begin{align*}
    \|x_{t}-x_{t|t}\| &\leq \sum_{\tau=1}^{t}\|(A+\mathbf{B}\bar{K})^{t-i}\theta_{i|i-1,i}\|\\
    &\leq C_{fb}^{2}\|\bar{x}_{1}\|q^{t}[\frac{C_{K}\gamma^{W}}{\gamma-1}\bigg(\frac{1-(\frac{\eta\gamma}{q})^{t}}{1-\frac{\eta\gamma}{q}} - \frac{1-(\frac{\eta}{q})^{t}}{1-\frac{\eta}{q}} \bigg)\\
    &\qquad+\varepsilon_{K}\bigg(\frac{(t-1)(\frac{\eta}{q})^{t+1}-t(\frac{\eta}{q})^{t}+\frac{\eta}{q}}{(1-\frac{\eta}{q})^{2}}\bigg)].
\end{align*}
% Thus,
% \begin{align*}
%     \|x_{t}-x_{t}^{*}\| \leq 
% \end{align*}
\end{proof}

Before presenting the Cost Difference Lemma that is essential for the proof of Theorem~\ref{thm:main}, we introduce the following notation.

Consider any policies $(\pi_{t}^{i})_{i=1,t=1}^{2,T-1}$ and $(\tilde{\pi}_{t}^{i})_{i=1,t=1}^{2,T-1}$ where $\pi_{t}^{i},\tilde{\pi}_{t}^{i} \in \Lambda$. We state the convention $\bar{\Pi}_{t}^{t_{0}} := (\Pi_{\tau})_{\tau=t}^{t_{0}}$ for the indices $t$ where $t\in \mathbb{N}_{T-1}$.
We again define 

\begin{align*}
&g_{t}^{i}(x_{t}, u_{t}^{1}, u_{t}^{2}) := g_{t}^{i}(x_{t}, \mathbf{u}_{t}) = x_{t}^{\transpose}Q_{t}x_{t} + \mathbf{u}_{t}^{\transpose}R_{t}^{i},\\
\begin{split}
    &V_{i,T,t}^{\bar{\Pi}_{t}^{T-1}}(x_{t}) := \\
    &\begin{cases}
        \sum_{l=0}^{T-1-t} g_{t+l}^{i}(x_{t+1+l}^{\bar{\Pi}_{t+l}^{t+l+1}}(x_{t+l}), \Pi_{t+l}(x_{t+l})) & \text{$1 \leq t < T-1$}\\
        $0$ & \text{$t \geq T-1$},
    \end{cases}
\end{split}
    \\
    &Q_{i,T,t}^{\bar{\Pi}_{t}^{T-1}}(x_{t},u_{t}^{1},u_{t}^{2}) := g_{t}^{i}(x_{t},u_{t}^{1},u_{t}^{2}) + V_{i,T,t+1}^{\bar{\Pi}_{t+1}^{T-1}}(x_{t+1}^{\bar{\Pi}_{t+1}^{T-1}}).
\end{align*}
for $i \in \{1,2\}$ where $u_{t}^{i} = \pi_{t}^{i}(x_{t})$ .
Now we present our Cost Difference Lemma.
\begin{lemma}[Cost Difference Lemma]\label{lemma:costDifference}
Given a positive integer $T \geq 1$, for $t\in \mathbb{N}_{T-1}$ and $i = \{1,2\}$, consider policies
$(\pi_{t}^{i})_{t=1}^{T-1}$,$(\tilde{\pi}_{t}^{i})_{t=1}^{T-1}$ such that $\pi_{t}^{i},\tilde{\pi}_{t}^{i} \in \Lambda$. Let $(\Pi_{t})_{t=1}^{T-1} = (\pi_{t}^{1},\pi_{t}^{2})_{t=1}^{T-1}$ and $(\tilde{\Pi}_{t})_{t=1}^{T-1} = (\tilde{\pi}_{t}^{1},\tilde{\pi}_{t}^{2})_{t=1}^{T-1}$
Then, we have
\begin{align}
    &J_{i,T}(\bar{x}_{1},(\Pi_{t})_{t=1}^{T-1}) - J_{i,T}(\bar{x}_{1},(\tilde{\Pi}_{t})_{t=1}^{T-1})\notag \\
    &= \sum_{t=1}^{T-1} Q_{i,T,t}^{\bar{\Pi}_{t}^{T-1}}(x_{t},u_{t}^{1},u_{t}^{2}) -  V_{i,T,t}^{\bar{\Pi}_{t}^{T-1}}(x_{t}) \label{eq:cost_diff_lemma},
\end{align}
where $\tilde{x}_{t+1} = \tilde{x}_{t+1}^{\bar{\Pi}_{t}^{t+1}}(\tilde{x}_{t})$, $u_{t}^{i} = \pi_{t}^{i}(x_{t})$ and $x_{t},u_{t}^{1},u_{t}^{2}$ satisfy \eqref{eq:linsys}.
\end{lemma}
\begin{proof} Starting from the RHS of \eqref{eq:cost_diff_lemma} we have:
\begin{align*}
    &\sum_{t=1}^{T-1} Q_{i,T,t}^{\bar{\Pi}_{t}^{T-1}}(x_{t},u_{t}^{1},u_{t}^{2}) -  V_{i,T,t}^{\bar{\Pi}_{t}^{T-1}}(x_{t}) \\
    &= \sum_{t=1}^{T-1} g_{t}(x_{t},u_{t}^{1},u_{t}^{2})+ V_{i,T,t+1}^{\bar{\tilde{\Pi}}_{t+1}^{T-1}}(x_{t+1})-V_{i,T,t}^{\bar{\tilde{\Pi}}_{t}^{T-1}}(x_{t})\\
    &= \underbrace{\sum_{t=1}^{T-1} g_{t}(x_{t},u_{t}^{1},u_{t}^{2})}_{J_{i,T}(\bar{x}_{1},(\Pi_{t})_{t=1}^{T-1})} - V_{i,T,1}^{\bar{\tilde{\Pi}}_{1}^{T-1}}(x_{1})\\
    &= J_{i,T}(\bar{x}_{1},(\Pi_{t})_{t=1}^{T-1}) - J_{i,T}(\bar{x}_{1},(\tilde{\Pi}_{t})_{t=1}^{T-1}).
\end{align*}
\end{proof}

\begin{proposition}\label{corollary:Delta}
    For $t\in \mathbb{N}_{T-1}$, and $i \in \{1,2\}$, consider $R_{t}^{i}$ from \eqref{eq:rblock}, $A$ and $\mathbf{B}$ from \eqref{eq:linsys}, $P_{t+1}^{i}$ from \eqref{eq:pIteration} and $\bar{K}_{t}$ from Lemma \ref{lemma:boundedP}. There exist positive scalars $\Delta_{1}$ and $\Delta_{2}$ that are invariant to time and player indices, satisfy
    \begin{align}
    \label{eq:delta1}
        &\|R_{t}^{i}+\mathbf{B}^{\transpose}P_{t+1}^{i}\mathbf{B}\| \leq \Delta_{1},\\
        \label{eq:delta2}
        &\|(R_{t}^{i}+\mathbf{B}^{\transpose}P_{t+1}^{i}\mathbf{B})\bar{K}_{t}+\mathbf{B}^{\transpose}P_{t+1}^{i}A\| \leq \Delta_{2}.
    \end{align}
\end{proposition}
 \begin{proof}
     If costs $\{R_{t}^{i}\}_{i=1,t=1}^{2,T-1}$ and $\{Q_{t}\}_{t=1}^{T}$ from LQ-DFG is an LQ-DFPG. By Lemma \ref{lemma:gamePGrelation}, there exists a scalar $\Delta^{'}$ that is independent of $t$ and $i$ that$
         \|\contB^{\transpose}P_{t+1}^{i}\| = \|\contB^{\transpose}\bar{P}_{t+1}\| \leq \Delta^{'}$, where $\bar{P}_{t+1}$ is defined in Lemma \ref{lemma:gamePGrelation}. 
     Due to Assumption \ref{assumption:bounds}, matrix $\|R_{t}^{i}\|$ is upper bounded uniformly w.r.t. $t$ and $i$. Thus, we can find such $\Delta_{1}$ and $\Delta_{2}$ to satisfy  \eqref{eq:delta1} and \eqref{eq:delta2}.
 \end{proof}
 We have established all essential auxiliary lemmas and we can now use them to prove Theorem~\ref{thm:main}.
\section{Proof of Theorem~\ref{thm:main}}
We can now prove Theorem~\ref{thm:main} by using the above auxiliary results.
Let $(\Pi_{t})_{t=1}^{T-1}$ be the control policies given in \eqref{eq:policy}, and $(\Pi_{t}^{i*})_{t=1}^{T-1}$ be the control policies that generate feedback Nash equilibrium.
By applying the Cost Difference Lemma (Lemma \ref{lemma:costDifference}), we have
\begin{align*}
    &\text{PoU}_{T}((\Pi_{t})_{t=1}^{T-1})\\
    &= \frac{1}{2}\sum_{i=1}^{2}\sum_{t=1}^{T-1} Q_{i,T,t}^{\bar{\Pi}_{t}^{*T-1}}(x_{t},u_{t}^{1},u_{t}^{2}) -  V_{i,T,t}^{\bar{\Pi}_{t}^{*T-1}}(x_{t})\\
    &= \frac{1}{2}\sum_{i=1}^{2}\sum_{t=1}^{T-1} (\mathbf{u}_{t}-\contTilde{u}_{t})^{\transpose}(R_{t}^{i}+\mathbf{B}^{\transpose}P_{t+1}^{i}\mathbf{B})(\mathbf{u}_{t}-\contTilde{u}_{t})\\
    &\qquad + 2(\mathbf{u}_{t}-\contTilde{u}_{t})^{\transpose}\bigg((R_{t}^{i}+\mathbf{B}^{\transpose}P_{t+1}^{i}\mathbf{B})\bar{K}_{t}+\mathbf{B}^{\transpose}P_{t+1}^{i}A\bigg)x_{t}.
\end{align*}

We start with finding an upper bound of the PoU. Since $\|\mathbf{u}_{t} - \contTilde{u}_{t}\| \leq \|\bar{K}_{t}-\bar{K}\|\|x_{t\mid t}-x_{t}\|  + \|\bar{K}_{t|t}-\bar{K}_{t}\| \|x_{t|t}\|$, and the elementary inequality $(a_{1}+a_{2})^2 \leq 2(a_{1}^{2}+a_{2}^{2})$ for any $a_{1}, a_{2} \in \mathbb{R}$, we have
\begin{align*}
    \|\mathbf{u}_{t} - \contTilde{u}_{t}\|^{2}
    &= \|\bar{K}x_{t} + (\bar{K}_{t\mid t}-\bar{K})x_{t\mid t} - \bar{K}_{t}x_{t}\|^{2}\\
    &\leq 2(\|\bar{K}_{t}-\bar{K}\|^{2}\|x_{t\mid t}-x_{t}\|^{2}  + \|\bar{K}_{t|t}-\bar{K}_{t}\|^{2} \|x_{t|t}\|^{2}),
\end{align*}

By Proposition~\ref{corollary:Delta}, there exist $\Delta_{1}>0,\Delta_{2}>0$, such that
\begin{align}
    &\text{PoU}((\Pi_{t})_{t=1}^{T-1})\notag\\ 
    &\leq 2\Delta_{1} \sum_{t=1}^{T-1} \bigg(\|\bar{K}_{t}-\bar{K}\|^{2}\|x_{t\mid t}-x_{t}\|^{2}  + \|\bar{K}_{t|t}-\bar{K}_{t}\|^{2} \|x_{t|t}\|^{2} \bigg) \notag\\
    &\qquad + \Delta_{2}\sum_{t=1}^{T-1}\bigg( \|\bar{K}_{t}-\bar{K}\|\|x_{t\mid t}-x_{t}\|  + \|\bar{K}_{t|t}-\bar{K}_{t}\| \|x_{t|t}\|\bigg)\notag \\
    &\qquad\qquad\bigg(\|x_{t}-x_{t|t}\| + \|x_{t|t}\|\bigg), \label{eq:RegredUnsimplified}
\end{align}
where $\Delta_{1} = \max_{t} \frac{1}{2}\|\sum_{i=1}^{2}(R_{t}^{i}+\mathbf{B}^{\transpose}P_{t+1}^{i}\mathbf{B})\|$,
    $\Delta_{2} = \max_{t} \|\sum_{i=1}^{2}\bigg( (R_{t}^{i}+\mathbf{B}^{\transpose}P_{t+1}^{i}\mathbf{B})\bar{K}_{t}+\mathbf{B}^{\transpose}P_{t+1}^{i}A\bigg)\|$.
    Applying Lemma \ref{lemma:distanceXt} and using the same $q$ as in this lemma, we have
\begin{align*}
    \|x_{t}-x_{t|t}\|&\leq C_{fb}^{2}\|\bar{x}_{1}\|q^{t}[\frac{C_{K}\gamma^{W}}{\gamma-1}\bigg(\frac{1-(\frac{\eta\gamma}{q})^{t}}{1-\frac{\eta\gamma}{q}} - \frac{1-(\frac{\eta}{q})^{t}}{1-\frac{\eta}{q}} \bigg)\\
    &\qquad+\varepsilon_{K}\bigg(\frac{(t-1)(\frac{\eta}{q})^{t+1}-t(\frac{\eta}{q})^{t}+\frac{\eta}{q}}{(1-\frac{\eta}{q})^{2}}\bigg)]\\
    &\leq C_{x}\|\bar{x}_{1}\|q^{t}.
\end{align*}
To simplify the presentation of the main result, define
\begin{align*}
    &C_{*} := \max_{t} \|\bar{K}_{t}-\bar{K}\|,C_{x} := C_{fb}^{2}D_{K}(\gamma^{W},\varepsilon_{K}),\\
    &D_{K}(\gamma^{W},\varepsilon_{K}) := \frac{C_{K}\gamma^{W}q\eta(\gamma-1)}{(q-\eta\gamma)(q-\eta)} + \frac{\varepsilon_{K}q\eta}{(q-\eta)^{2}},\\
    &\Delta_{a}(\gamma^{W},\varepsilon_{K}) := 2(\Delta_{1}+\Delta_{2})C_{*}^{2}D_{K}(\gamma^{W},\varepsilon_{K})C_{x}^{2},\\
    &\Delta_{b}(\gamma^{W},\varepsilon_{K}) := 4\Delta_{1}C_{fb}^{2}(C_{K}^{'2}\gamma^{2W}+\frac{\varepsilon_{K}^{2}}{\|\contB\|^{2}} )+2\Delta_{2}(C_{K}^{'}\gamma^{W}+\frac{\varepsilon_{K}}{\|\contB\|}),\\
    &\Delta_{c}(\gamma^{W},\varepsilon_{K}) := 2C_{x}C_{fb}\Delta_{2}(C_{K}^{'}\gamma^{W}+\frac{\varepsilon_{K}}{\|\contB\|}+C_{*}D_{K}(\gamma^{W},\varepsilon_{K})),
\end{align*}
where constants $C^{'}_{K}, \varepsilon_{K}$, and $C_{fb}$ are from Lemmas \ref{lemma:boundedPK} to \ref{lemma:distanceXt}.

To simplify \eqref{eq:RegredUnsimplified}, let
\begin{align*}
    \Gamma_1(t) &:= C_{*}^{2}D_{K}(\gamma^{W},\varepsilon_{K})C_{x}^{2}q^{2t} + 2(C_{K}^{'2}\gamma^{2W}+\varepsilon_{K}^{'2})C_{fb}^{2}\eta^{2t} \\
    \Gamma_2(t) &:= C_{*}D_{K}(\gamma^{W},\varepsilon_{K})C_{x}q^{t}+(C_{K}^{'}\gamma^{W}+\varepsilon_{K}^{'})C_{fb}\eta^{t},\\
    \Gamma_3(t) &:= C_{x}q^{t}+C_{fb}\eta^{t}
\end{align*}

From Lemmas \ref{lemma:boundedPK} and \ref{lemma:multGain}, we have 
\begin{align*}
     \|\bar{K}_{t}-\bar{K}\|^{2}\|x_{t\mid t}-x_{t}\|^{2}  + \|\bar{K}_{t|t}-\bar{K}_{t}\|^{2} \|x_{t|t}\|^{2} & \leq \|\bar{x}_1\|^2\Gamma_1(t) \\
     \|\bar{K}_{t}-\bar{K}\|\|x_{t\mid t}-x_{t}\|  + \|\bar{K}_{t|t}-\bar{K}_{t}\| \|x_{t|t}\| & \leq \|\bar{x}_{1}\| \Gamma_2(t)\\
     \|x_{t}-x_{t|t}\| + \|x_{t|t}\| & \leq \|\bar{x}_{1}\|\Gamma_3(t).
\end{align*}
where $\eta$ and $\gamma$ are defined in aforementioned lemmas.
Moreover, 
\begin{align*}
   \sum_{t=1}^{T-1} \Gamma_1(t) \leq \overline{\Gamma}_1,
   \sum_{t=1}^{T-1} \Gamma_2(t)\Gamma_3(t) \leq \overline{\Gamma}_2,
 \end{align*}
where 
\begin{align*}
    \overline{\Gamma}_1 &= C_{*}^{2}D_{K}(\gamma^{W},\varepsilon_{K})C_{x}^{2}\frac{1-q^{2T}}{1-q^{2}} + 2C_{fb}^{2}(C_{K}^{'2}\gamma^{2W}+\varepsilon_{K}^{'2})\frac{1-\eta^{2T}}{1-\eta^{2}}\\
    \overline{\Gamma}_2 &=C_{*}D_{K}(\gamma^{W},\varepsilon_{K})C_{x}^{2}\frac{1-q^{2T}}{1-q^{2}}+ C_{fb}^{2}(C_{K}^{'}\gamma^{W}+\varepsilon_{K}^{'})\frac{1-\eta^{2T}}{1-\eta^{2}}\\
    & \qquad +[D_{K}(\gamma^{W},\varepsilon_{K})C_{x}C_{fb}+C_{x}C_{fb}(C_{K}^{'}\gamma^{W}+\varepsilon_{K}^{'})]\frac{1-(q\eta)^{T}}{1-q\eta}
\end{align*}
Note that the right hand side of \eqref{eq:RegredUnsimplified} then can be bounded by $\overline{\Gamma}$ where
\begin{align*}
    \overline{\Gamma} &= \|\bar{x}_1\|^2 \left (2\Delta_1 \overline{\Gamma}_1 + \Delta_2 \overline{\Gamma}_2 \right )\\
    &= \|\bar{x}_{1}\|^{2}\left [\Delta_{a}(\gamma^{W},\varepsilon_{K}^{'})\frac{1-q^{2T}}{1-q^{2}} +  \Delta_{b}(\gamma^{W},\varepsilon_{K}^{'})\frac{1-\eta^{2T}}{1-\eta^{2}} \right .\\ 
    & \left . \qquad + \Delta_{c}(\gamma^{W},\varepsilon_{K}^{'})\frac{1-(q\eta)^{T}}{1-q\eta}\right ]
\end{align*}
By inspection, the PoU upper bound, $\overline{\Gamma}$, can be expressed as
$C_{1}\gamma^{2W}+C_{2}\gamma^{W}+C_{3}\varepsilon_{K}$, where $C_{1},C_{2}$ and $C_{3}$ are
monotonically increasing with respect to $\|\bar{x}_1\|$, $\lambda_{max}(R_{max}^{'})$
and $\lambda_{max}(Q_{max})$, and the inverse of $\rho(A+\contB\bar{K})$,
$\lambda_{min}(R_{min}^{'})$ and $\lambda_{min}(Q_{min})$.

We next find the lower bound of PoU.
By Assumption \ref{assumption:bounds}, $R_{t}^{i} \in \mathbb{S}_{++}^{m}$. From \eqref{eq:pIteration} and Assumption \ref{assumption:bounds}, $Q_{t} \in \mathbb{S}_{++}^{n}$ and $R_{t}^{i} \in \mathbb{S}_{++}^{m}$, we can deduce that $P_{t}^{i} \in \mathbb{S}_{++}^{n}$ by induction. Therefore, we have
$R_{t}^{i}+\mathbf{B}^{\transpose}P_{t+1}^{i}\mathbf{B} \in \mathbb{S}_{++}^{n}$. Equivalently, $(\mathbf{u}_{t}-\contTilde{u}_{t})^{\transpose}(R_{t}^{i}+\mathbf{B}^{\transpose}P_{t+1}^{i}\mathbf{B})(\mathbf{u}_{t}-\contTilde{u}_{t}) \geq 0$. Let $F_{t} := 2(\mathbf{u}_{t}-\contTilde{u}_{t})^{\transpose}\bigg((R_{t}^{i}+\mathbf{B}^{\transpose}P_{t+1}^{i}\mathbf{B})\bar{K}_{t}+\mathbf{B}^{\transpose}P_{t+1}^{i}A\bigg)x_{t}$. Note that
\begin{align*}
    \text{PoU}((\Pi_{t})_{t=1}^{T-1}) &\geq \sum_{t=1}^{T-1} F_{t}\\
    &\geq \sum_{t=1}^{T-1} -2\|\mathbf{u}_{t}-\contTilde{u}_{t}\|(R_{t}^{i}+\mathbf{B}^{\transpose}P_{t+1}^{i}\mathbf{B})\bar{K}_{t}\\
    &\qquad\quad+\mathbf{B}^{\transpose}P_{t+1}^{i}A\|\|x_{t}\|\\
    &\geq -\Delta_{2}\bar{\Gamma}_{2}.
\end{align*}
By inspection, the PoU lower bound $-\Delta_{2}\bar{\Gamma}_{2}$, can be expressed as $-C_{1}^{'}\gamma^{2W} - C_{2}^{'}\gamma^{W} - C_{3}^{'}\varepsilon_{K}$, where $C_{1}^{'},C_{2}^{'}$ and $C_{3}^{'}$ are positive scalars that also monotonically increasing w.r.t. $\|\bar{x}_{1}\|$, $\lambda_{max}(R_{max}^{'})$ and $\lambda_{max}(Q_{max})$, and the inverse of $\rho(A+\contB\bar{K})$, $\lambda_{min}(R_{min}^{'})$ and $\lambda_{min}(Q_{min})$.

The proposition below proves that the sufficient condition provided in Section \ref{sec:assumptions} is valid for Assumptions \ref{assumption:parameters}, \ref{assumption:controllable}, \ref{assumption:positiveRp}, and \ref{assumption:potentialRepeat}.
\begin{proposition}\label{prop:valid}
    Suppose $A$ is a full-rank square matrix. Consider $R_{t}^{i},Q_{t}$ are defined in \eqref{eq:costSelection}.
    For integers $T \geq 1$, $0 \leq t \leq T-1$ and $i,j\in \{1,2\}$, if $\frac{b_{i}^{2}}{r_{i,t}} = \frac{b_{j}^{2}}{r_{j,t}}$, then parameters $P_{t},\Theta_{t}$ defined in \eqref{eq:pIteration} and \eqref{eq:Theta}, respectively, satisfy conditions \eqref{eq:costFPDG4}-\eqref{eq:costFPDG3}, and Assumption \ref{assumption:potentialRepeat}.
\end{proposition}
\begin{proof}
    For $t = T$, since $P_{T}^{1} = P_{T}^{2} = Q_{T}$, and
    \begin{align*}
        \Theta_{T-1} = 
        \begin{bmatrix}
            r_{1,T-1} & 0\\
            0 & r_{2,T-1}
        \end{bmatrix}
        + \contB^{\transpose}Q_{T}\contB,
    \end{align*}
    we have 
    $\Theta_{T-1} \succ 0 $.
    Then, for $t = T-1$, 
    \begin{align*}
        P_{T-1}^{1}-P_{T-1}^{2} = K_{T-1}^{\transpose}(R_{T-1}^{1}-R_{T-1}^{2})K_{T-1}.
    \end{align*}
    We next claim that $P_{T-1}^{1}=P_{T-1}^{2}$.
    Since
    \begin{align*}
        &\det(\Theta_{T-1})^{2}\contB\Theta_{T-1}^{-1}(R_{T-1}^{1}-R_{T-1}^{2})\Theta_{T-1}^{-1}\contB^{\transpose} = \\
        &
        \begin{bmatrix}
            -b_{1} & -b_{2}\\
            0 & 0
        \end{bmatrix}
        \begin{bmatrix}
            r_{2,T-1}+B^{2\transpose}P_{T-1}^{2}B^{2\transpose} & B^{2\transpose}P_{T-1}^{2}B^{1\transpose}\\
            B^{1\transpose}P_{T-1}^{1}B^{2\transpose} & r_{1,T-1}+B^{1\transpose}P_{T-1}^{1}B^{1\transpose}
        \end{bmatrix}\\
        &
        \begin{bmatrix}
            r_{1,T-1} & 0\\
            0 & -r_{2,T-1}
        \end{bmatrix}
        \begin{bmatrix}
            r_{2,T-1}+B^{2\transpose}P_{T-1}^{2}B^{2\transpose} & B^{2\transpose}P_{T-1}^{2}B^{1\transpose}\\
            B^{1\transpose}P_{T-1}^{1}B^{2\transpose} & r_{1,T-1}+B^{1\transpose}P_{T-1}^{1}B^{1\transpose}
        \end{bmatrix}\\
        &\begin{bmatrix}
            -b_{1} & 0\\
            -b_{2} & 0
        \end{bmatrix}
    \end{align*}
    Due to
    \begin{align*}
        \begin{bmatrix}
            \alpha & \beta\\
            \beta & \gamma
        \end{bmatrix}
        \begin{bmatrix}
            h_{1} & 0\\
            0 & h_{2}
        \end{bmatrix}
        \begin{bmatrix}
            \alpha & \beta\\
            \beta & \gamma
        \end{bmatrix} = 
        \begin{bmatrix}
            h_{1}\alpha^{2}+h_{2}\beta^{2} & h_{1}\alpha\beta + h_{2}\beta\gamma\\
            h_{1}\alpha\beta + h_{2}\beta\gamma & h_{1}\beta^{2} + h_{2}\gamma
        \end{bmatrix},
    \end{align*}
    and
    \begin{align*}
        B^{2\transpose}P_{T-1}B^{2} = b_{2}^{2}[Q_{T}]_{11},\\
        B^{1\transpose}P_{T-1}B^{1} = b_{1}^{2}[Q_{T}]_{11},
    \end{align*}
    substitute $h_{1} = r_{1,T-1}, h_{2}=-r_{2,T-1}$, we have
    \begin{align*}
        \frac{h_{1}\alpha}{-h_{2}\gamma} = \frac{r_{1,T-1}}{r_{2,T-1}}\frac{r_{2,T-1}+b_{2}^{2}[Q_{T}]_{11}}{r_{1,T-1}+b_{1}^{2}[Q_{T}]_{11}} = \frac{r_{1,T-1}}{r_{2,T-1}}\frac{r_{2,T-1}}{r_{1,T-1}} = 1,
    \end{align*}
    therefore, $h_{1}\alpha\beta + h_{2}\beta\gamma=0$.
    Moreover,
    \begin{align*}
        &
        \begin{bmatrix}
            -b_{1} & -b_{2}\\
            0 & 0
        \end{bmatrix}
        \begin{bmatrix}
            c_{1} & 0\\
            0 & c_{2}
        \end{bmatrix}
        \begin{bmatrix}
            -b_{1} & 0\\
            -b_{2} & 0
        \end{bmatrix}
        =
        \begin{bmatrix}
            c_{1}b_{1}^{2}+c_{2}b_{2}^{2} & 0\\
            0 & 0
        \end{bmatrix}.
    \end{align*}
    Here, $c_{1} = r_{1,T-1}(r_{2,T-1}+b_{2}^{2}[Q_{T}]_{11}), c_{2} = -r_{2,T-1}(r_{1,T-1}+b_{1}^{2}[Q_{T}]_{11})$. Apply $\frac{b_{1}^{2}}{b_{2}^{2}} = \frac{r_{1,T-1}}{r_{2,T-1}}$ again, we have $c_{1}b_{1}^{2}+c_{2}b_{2}^{2}=0$. Therefore, $K_{T-1}^{\transpose}(R_{T-1}^{1}-R_{T-1}^{2})K_{T-1} = 0$. 
    
    Let's assume that $P_{t+1}^{1}-P_{t+1}^{2}=0$. Due to 
    \begin{align*}
        \Theta_{t} = 
        \begin{bmatrix}
            r_{1,t} & 0\\
            0 & r_{2,t}
        \end{bmatrix}
        + \contB^{\transpose}P^{1}_{t+1}\contB,
    \end{align*}
    we have $\Theta_{t} \succ 0$, therefore $\det(\Theta_{t})>0$.
    Consider 
    \begin{align*}
        K_{t}^{\transpose}(R_{t}^{1}-R_{t}^{2})K_{t}=
        \contB\Theta_{t}^{-1}(R_{t}^{1}-R_{t}^{2})\Theta_{t}^{-1}\contB^{\transpose}.
    \end{align*}
    By repeating the steps above and using the relationship of $B^{1\transpose}P_{t+1}^{1}B^{1}=b_{1}^{2}[P_{t+1}^{1}]_{11}$, we have
    \begin{align*}
        \frac{h_{1}\alpha}{-h_{2}\gamma} = \frac{r_{1,t}}{r_{2,t}}\frac{r_{2,t}+b_{2}^{2}[P_{t+1}^{2}]_{11}}{r_{1,t}+b_{1}^{2}[P_{t+1}^{1}]_{11}} = 1,
    \end{align*}
    and $\frac{b_{1}^{2}}{b_{2}^{2}} = \frac{r_{1,t}}{r_{2,t}}$.
    Therefore, by induction, we can conclude that for $0 \leq t \leq T-1$, parameters $P_{t},\Theta_{t}$ defined in \eqref{eq:pIteration} and \eqref{eq:Theta}, respectively, satisfy conditions \eqref{eq:costFPDG4}-\eqref{eq:costFPDG3}, and Assumption \ref{assumption:potentialRepeat}.
\end{proof}

\end{document}